\newtheorem{teor}{Theorem}[section]
\newtheorem{prop}[teor]{Proposition}
\newtheorem{lema}[teor]{Lemma}
\theoremstyle{definition}
\newtheorem{definition}{Definition}
\newtheorem{problem}{Problem}
\newtheorem{example}[teor]{Example}
 \newtheorem{defn}[teor]{Definition}
\newcommand{\adef}{\begin{defn}}
\newcommand{\zdef}{\end{defn}}
\theoremstyle{remark}
\newtheorem{remarks}[teor]{Remarks}
\newcommand{\R}{\mathbb{R}}
\newcommand{\N}{\mathbb{N}}
\newcommand{\U}{\mathscr{U}}
\newcommand{\aproof}{\begin{proof}}
\newcommand{\zproof}{\end{proof}}
\def\XiU{[X_i]_{\mathscr U}}
\def\Xii{(X_i)_{i\in I}}
\def\d{\operatorname{dist}}
\def\V{\mathscr{V}}
\def\e{\varepsilon}
\def\sg{\operatorname{sign}}
\def\N{\mathbb{N}}
\def\span{\operatorname{span}}
\def\R{\mathbb R}
\begin{document}

\title{On ultrapowers of Banach spaces of type $\mathscr L_\infty$}

\author[Avil\'es et al.]{Antonio Avil\'es,
F\'elix Cabello S\'anchez,\\ Jes\'us M. F. Castillo, Manuel
Gonz\'alez and Yolanda Moreno}

%\author{Antonio Avil\'es}
\address{Departamento de Matem\'aticas, Universidad de Murcia, 30100 Espinardo,
Murcia, Spain} \email{avileslo@um.es}

%\author{F\'elix Cabello S\'anchez}
\address{Departamento de Matem\'aticas, Universidad de Extremadura, Avenida de Elvas s/n, 06071 Badajoz, Spain}
             \email{fcabello@unex.es}

%\author{Jes\'us M. F. Castillo}
\address{Departamento de Matem\'aticas, Universidad de Extremadura, Avenida de Elvas s/n, 06071 Badajoz, Spain}
             \email{castillo@unex.es}

%\author{Manuel Gonz\'alez}
\address{Departamento de Matem\'aticas, Universidad de Cantabria, Avenida los Castros s/n, 39071 Santander, Spain}
             \email{manuel.gonzalez@unican.es}

%\author{Yolanda Moreno.}
\address{Escuela Polit\'ecnica, Universidad de Extremadura, Avenida de la Universidad s/n, 10071 C\'aceres, Spain}
             \email{ymoreno@unex.es}
%    Remove any unused author tags.

%    author one information

%Departamento de Matem\'aticas, Universidad de Extremadura}

\thanks{ 2010 \it{Mathematics Subject Classification}:  46B08, 46M07, 46B26.}

%\noindent{\footnotesize{Version: May 29, 2013}}

\bigskip

\bigskip

\maketitle

\begin{abstract} We prove that no ultraproduct of Banach spaces via a countably
incomplete ultrafilter can contain $c_0$ complemented. This shows
that a ``result'' widely used in the theory of ultraproducts is wrong.
We then amend a number of results whose proofs had been infected by that statement. In particular we provide proofs for the following statements:
(i) All $M$-spaces, in particular all $C(K)$-spaces, have
ultrapowers isomorphic to ultrapowers of $c_0$, as well as all
their complemented subspaces isomorphic to their square. (ii) No
ultrapower of the Gurari\u \i\ space can be complemented in any
$M$-space. (iii) There exist Banach spaces not complemented in
any $C(K)$-space having ultrapowers isomorphic to a
$C(K)$-space.\end{abstract}

\section{Introduction}
The Banach space ultraproduct construction has been, and still continues to be, the main bridge between model theory and the theory of Banach spaces and its ramifications. Ultraproducts of Banach spaces, even at a very elementary level, proved very useful in the ``local theory'', the study of Banach lattices, and also in some nonlinear problems, such as the uniform and Lipschitz classification of Banach spaces. We refer the reader to Heinrich's survey paper \cite{heinrich} and  Sims' notes \cite{sims} for two complementary accounts. While the study of the isometric properties of ultraproducts goes back to its inception in Banach space theory and produced  a rather coherent set of results
very early (see for instance \cite{hensoniovino}), not much is known about the isomorphic theory. The purpose of this paper is to study the interplay between the isomorphic theory of Banach spaces and ultraproducts, placing the emphasis on spaces of type $\mathscr L_\infty$. To do this we need first to clarify the status of a number of ``results'' in the theory of ultraproducts of Banach spaces.
Let us explain this point in detail as it might be the most interesting feature of the paper to some readers. We refer the reader to Section~\ref{pre} for precise definitions and all unexplained notation.

The following statement appears, without proof, as Lemma 4.2 (ii) in
Stern's paper \cite{stern}:
\begin{itemize}

\item[$\bigstar$] If $\mathscr U$ is a countably incomplete ultrafilter and $H$ is the corresponding ultrapower of $c_0$, then $H$ contains a complemented subspace isometric to $c_0(H)$.

\end{itemize}

Here, $c_0$ is the space of scalar sequences converging to zero and $c_0(H)$ is the space of sequences converging to zero in $H$, with the sup norm. This statement, however, turns out to be false (see below).
Unfortunately, Stern's Lemma has infected the proofs of a number
of results in the nonstandard theory and ultraproduct theory of Banach
spaces. We can mention:
\begin{itemize}

\item[(a)] If $E$ is isomorphic to a complemented subspace of a
$C$-space, then $E$ has an ultrapower
isomorphic to a $C$-space (Stern \cite[Theorem
4.5(ii)]{stern}) and also Henson-Moore \cite[Theorem 6.6
(c)]{hensonmoore83}).

\item[(b)] If $E$ is isomorphic to a complemented subspace of an
$M$-space, then $E$ has an ultrapower
isomorphic to a $C$-space (Heinrich-Henson
\cite[Theorem~12(c)]{HH}).

\item[(c)] If $E$ is an $M$-space then $E$ has an ultrapower isomorphic to an ultrapower
of $\ell_\infty$ (Henson-Moore \cite[Theorem
6.7]{hensonmoore83}).

\item[(d)] Ultrapowers of the Gurari\u \i\ space with respect to
countably incomplete ultrafilters are not complemented in any
$C$-space. (Henson-Moore \cite[Theorem~6.8]{hensonmoore83})
\end{itemize}

(Here, a $C$-space is a Banach space isometrically isomorphic to $C(K)$, the space of all continuous functions on the compact space $K$ with the sup norm again, while an $M$-space is a sublattice of a $C$-space; see Section~\ref{lind}.)\\

With this background in mind let us explain the organization of the paper and summarize its main results. Section~\ref{pre} is preliminary and it mostly consists of definitions and conventions about the notation. Section~\ref{stern} contains a few general results on the structure of ultraproducts of Banach spaces -- we invariably assume they are built over countably incomplete ultrafilters. We will show that ultraproducts of Banach spaces are Grothendieck spaces as long as they are $\mathscr L_\infty$-spaces (Proposition \ref{ultra-Linf}). A Grothendieck space is a Banach space were $c_0$-valued operators are weakly compact: in particular no Grothendieck space can contain a complemented copy of $c_0$. This already shows that Stern's lemma is wrong. And indeed more is true: $c_0$ is never complemented in ultraproducts (Proposition \ref{coout}). Interesting sideways can
be taken to arrive to these results. In \cite{accgm1} we have shown that ultrapowers of $\mathscr L_\infty$-spaces are
``universally separably injective" --$E$ is universally separably
injective if $E$-valued operators extend from separable subspaces-- and that universally
separably injective spaces are always Grothendieck. To complete those results we have added a proof that infinite dimensional ultraproducts
via countably incomplete ultrafilters are never injective spaces,
a result basically due to Henson and Moore \cite[Theorem
2.6]{hensonmoore}. In Section~\ref{u} we consider the problem of whether two given Banach spaces have isomorphic (not necessarily isometric) ultrapowers. Regarding the statements (a) to (d) we show that (c) and (d) are true and we provide amendments for (a) and (b) by proving that they hold
under the additional hypothesis
that $E$ is isomorphic to its square. The closing Section~\ref{open} contains some additional results, together with some open problems that we found interesting.

\section{Preliminaries}\label{pre}

\subsection{Filters} A family $\mathscr U$ of subsets of a given set $I$ is said to be a filter if it is closed under finite intersection, does not contain the empty set and, one has $A\in\mathscr U$ provided $B\subset A$ and $B\in\mathscr U$. An ultrafilter on $I$ is a filter which is maximal with respect to inclusion. If $X$ is a (Hausdorff) topological space, $f:I\to X$ is a function, and $x\in X$, one says that $f(i)$ converges to $x$ along $\mathscr U$ (written $x=\lim_\mathscr U f(i)$ to short) if whenever $V$ is a neighborhood of $x$ in $X$ the set $f^{-1}(V)=\{i\in I: f(i)\in V\}$ belongs to $\mathscr U$. The obvious compactness argument shows that if $X$ is compact and Hausdorff, and $\mathscr U$ is an ultrafilter on $I$, then for every function $f:I\to X$ there is a unique $x\in X$ such that $x=\lim_\mathscr U f(i)$.

\begin{definition}\label{CI}
An ultrafilter $\U$ on a set $I$ is countably incomplete if there
is a sequence $(I_n)$ of subsets of $I$ such that
$I_n\in \U$ for all $n$, and $\bigcap_{n=1}^\infty
I_n=\varnothing$.
\end{definition}

Throughout this paper all ultrafilters will be assumed to be
countably incomplete. Notice that $\U$ is countably incomplete if
and only if there is a function $n:I\to \N$ such that
$n(i)\to\infty$ along $\U$ (equivalently, there is a family
$\e(i)$ of strictly positive numbers converging to zero along
$\U$). It is obvious that any countably incomplete ultrafilter is free (it contains no singleton)
and also that every  free
ultrafilter on $\N$ is countably incomplete. Assuming all free
ultrafilters are countably incomplete is consistent with
{\textsf{ZFC}}, the usual setting of set theory, with the axiom of choice.

\subsection{Ultraproducts of Banach spaces} % \cite{heinrich,sims}}
Let us briefly recall the definition and some basic properties of
ultraproducts of Banach spaces. Let $\Xii$ be a family of Banach spaces indexed by the set $I$ and let $\U$ be an ultrafilter on $I$. The space
of bounded families $ \ell_\infty(I, X_i)$ endowed
with the supremum norm is a Banach space, and $ c_0^\U(X_i)=
\{(x_i) \in \ell_\infty(I,X_i) : \lim_{\U} \|x_i\|=0\} $ is a
closed subspace of $\ell_\infty(I, X_i)$. The ultraproduct of the
spaces $\Xii$ following $\U$ is defined as the quotient space
$$
[X_i]_\U = {\ell_\infty(I, X_i)}/{c_0^\U(X_i)},
$$
with the quotient norm.
We denote by $[(x_i)]$ the element of $[X_i]_\U$ which has the
family $(x_i)$ as a representative. It is easy to see that $ \|[(x_i)]\| = \lim_{\U} \|x_i\|. $ In the case $X_i = X$
for all $i$, we denote the ultraproduct by $X_\U$, and call it the
ultrapower of $X$ following $\U$. If $T_i:X_i\to Y_i$ is a
uniformly bounded family of operators, the ultraproduct operator
$[T_i]_\U: [X_i]_\U\to [Y_i]_\U$ is given by $[T_i]_\U[(x_i)]=
[T_i(x_i)]$. Quite clearly, $ \|[T_i]_\U\|= \lim_{\U}\|T_i\|. $
%An ultraproduct of $C(K)$-space is a $C(K)$-space. More precisely,
%if $\Xii$ is a family of Banach algebras, then $\ell_\infty(X_i)$
%is also a Banach algebra, with the coordinatewise product. Thus,
%if $\U$ is an ultrafilter on $I$, $c_0^\U(X_i)$ is an ideal in
%$\ell_\infty(X_i)$ and $[X_i]_\U$ becomes a Banach algebra with
%product
%$$
%[(x_i)] \cdot [(y_i)] = [(x_i\cdot y_i)].
%$$
%Thus, if $\Kii$ is a family of compact spaces, the ultraproduct
%$[C(K_i)]_\U$ is canonically isomorphic to a $C(K)$ space, for
%some compact space $K$. This compact is called the (topological)
%ultracoproduct of $\Kii$, and it is denoted $\KiU$; actually
%$\KiU$ is the maximal ideal space of $[C(K_i)]_\U$ equipped with
%the relative weak* topology. Recall that an $\mathcal M$-space is
%a Banach lattice where $\|x+y\|=\max\{\|x\|,\|y\|\}$ provided $x$
%and $y$ are disjoint, that is, $|x|\wedge |y|=0$. Each (abstract)
%$\mathcal M$-space is representable as a (concrete) sublattice in
%some $C(K)$. Ultraproducts of $\mathcal M$-spaces are $\mathcal
%M$-spaces.
\subsection{Banach spaces of type $\mathscr L_\infty$ and Lindenstrauss spaces}
Throughout the paper we shall write $X\sim Y$ to indicate that the Banach spaces $X$ and $Y$ are linearly isomorphic. If they are isometric we write $X\approx Y$. The ground field is $\R$.

A Banach space $X$ is said to be an
$\mathscr L_{\infty,\lambda}$-space (with $\lambda \geq 1$) if every finite dimensional subspace $F$ of $X$
is contained in another finite dimensional subspace of $X$ whose
Banach-Mazur distance to the corresponding $\ell_\infty^n$ is at most
$\lambda$. The Banach-Mazur distance between two isomorphic Banach spaces $X$ and $Y$ is defined as $d(X,Y)=\inf_T\|T\|\|T^{-1}\|$, where $T$ runs over all isomorphisms between $X$ and $Y$.

An $\mathscr L_\infty$-space is just a $\mathscr{L}_{\infty,\lambda}$-space for some $\lambda\geq 1$; we
will say that it is a $\mathscr{L}_{\infty,\lambda+}$-space when it is
a $\mathscr{L}_{\infty,\mu}$-space for all $\mu>\lambda$. The
$\mathscr{L}_{\infty,1+}$-spaces are usually called Lindenstrauss
spaces and coincide with the isometric preduals of $L_1(\mu)$-spaces; see \cite[Theorem 4.1]{zippin}. The classes of
$\mathscr L_{\infty,\lambda+}$ spaces are stable under ultraproducts
\cite[Proposition 1.22]{BourgainLNM}. In the opposite direction, a
Banach space is a $\mathscr{L}_{\infty,\lambda+}$ space if and only
if some (or every) ultrapower is. In particular, a Banach space is
a $\mathscr{L}_{\infty}$ space or a Lindenstrauss  space if and
only if so are its ultrapowers; see, e.g., \cite{heinrichL1}. However it is possible to obtain Lindenstrauss\ spaces as ultraproducts of families of reflexive spaces: indeed, if $p(i)\to\infty$
along $\U$, then the ultraproduct $[L_{p(i)}]_\U$ is a
Lindenstrauss\ space -- in fact, an abstract $
M$-space; see \cite[Lemma 3.2]{c:M}.

\subsection{Some classes of Lindenstrauss spaces}\label{lind}
Some distinguished classes of Lindenstrauss spaces we shall consider along the paper are:
\begin{itemize}
\item $C$-spaces: Banach spaces of the form $C(K)$ for some compact Hausdorff space $K$, with the sup norm.
\item $C_0$-spaces: maximal ideals of $C$-spaces.
\item $G$-spaces: Banach spaces of the form $X=\{f\in C(K): f(x_i)=\lambda f(y_i) \text{ for all }i\in I\}$ for some compact space $K$ and some family of triples $(x_i,y_i,\lambda_i)$, where $x_i, y_i\in K$ and $\lambda_i\in \mathbb R$.
\item $M$-spaces: $G$-spaces where $\lambda_i\geq 0$ for every $i\in I$; equivalently, the closed sublattices of the $C$-spaces.
\end{itemize}

It is perhaps worth noticing that all these classes admit quite elegant characterizations: $C_0$-spaces ($C$-spaces) are exactly those real Banach algebras $X$ (with unit) satisfying the inequality $\|x\|^2\leq \|x^2+y^2\|$ for all $x,y\in X$, a classical result by Arens; see  \cite[Theorem 4.2.5]{a-k}.
Also, a Banach lattice $X$ is representable as a concrete $M$-space if and only if one has $\|x+y\|=\max(\|x\|,\|y\|)$ whenever $x$ and $y$ are disjoint, that is $|x|\wedge|y|=0$. Finally, $G$-spaces are exactly those Banach spaces that are contractively complemented in $M$-spaces. The preceding classes are closed under ultraproducts, see \cite[Proposition 1]{heinrichL1}. In particular, if $(K_i)$ is a family of compact spaces indexed by $I$ and $\U$ is an ultrafilter on $I$, then there is a compact space $K$ such that $[C(K_i)]_\U$ is isometric to $C(K)$. This compact space $K$ is often called the ultracoproduct of the family $(K_i)$ with respect to $\U$ and its is denoted by $(K_i)^\U$. We refer the interested reader to \cite[Section 4]{heinrich} or \cite[Section 8]{sims} for a description of $(K_i)^\U$ based on Banach algebras techniques and to
 \cite[Section 5]{bankston-survey} for a purely topological construction of the ultracoproduct.

\section{Around Stern's lemma}\label{stern}
Throughout this Section $\XiU$ will denote the ultraproduct of a family of Banach spaces $\Xii$ with respect to a countably incomplete ultrafilter $\U$.
We begin with the following result about the structure of separable
subspaces of ultraproducts of type $\mathscr L_\infty$.

\begin{lema}\label{S}
Supppose $\XiU$ is an $\mathscr L_{\infty, \lambda+}$-space. Then
each separable subspace of $\XiU$ is contained in a subspace of
the form $[F_i]_\U$, where $F_i\subset X_i$ is finite dimensional
and $\lim_{\U(i)}d(F_i,\ell_\infty^{k(i)})\leq \lambda$, with
$k(i)=\dim F_i$.
\end{lema}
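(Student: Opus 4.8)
The plan is a diagonalisation that trades countable incompleteness for the sharp constant: one produces, for every $n$, a finite-dimensional subspace of $\XiU$ with Banach--Mazur distance at most $\lambda+\tfrac1n$ to the appropriate $\ell_\infty^k$, represents it locally inside the factors $X_i$, and then glues these representations along $I$ by means of a function $n(i)\to\infty$ along $\U$, so that the errors $\tfrac1n$ disappear in the limit.

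Concretely, I would fix a dense sequence $(d_n)_{n\ge1}$ in the given separable subspace $S$, together with bounded representatives $d_n=[(d_{n,i})_i]$. For each $n$ the span $S_n=\span\{d_1,\dots,d_n\}$ is finite-dimensional, so, since $\XiU$ is an $\mathscr L_{\infty,\lambda+}$-space and $\lambda+\tfrac1n>\lambda$, there is a finite-dimensional $G_n\supseteq S_n$ in $\XiU$ with $d(G_n,\ell_\infty^{k_n})\le\lambda+\tfrac1n$, where $k_n=\dim G_n$. Now I would invoke the standard local representation of finite-dimensional subspaces of an ultraproduct (see \cite{heinrich}): picking a basis of $G_n$, bounded representatives of its elements, and letting $H_{n,i}\subseteq X_i$ be the span of the corresponding $i$-th coordinates, one obtains $G_n\subseteq[H_{n,i}]_\U$, with $\dim H_{n,i}=k_n$ for $\U$-almost all $i$ and $\lim_{\U(i)}d(H_{n,i},G_n)=1$; in particular $\lim_{\U(i)}d(H_{n,i},\ell_\infty^{k_n})\le\lambda+\tfrac1n$. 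Moreover $d_1,\dots,d_n\in G_n\subseteq[H_{n,i}]_\U$, so each $d_m$ with $m\le n$ admits a representative whose $i$-th coordinate lies in $H_{n,i}$ and is $\U$-asymptotically equal to $d_{m,i}$. Assembling this, for each $n$ there is a set $A_n\in\U$ on which, simultaneously, $\dim H_{n,i}=k_n$, $d(H_{n,i},\ell_\infty^{k_n})<\lambda+\tfrac2n$, and the chosen coordinates of $d_1,\dots,d_n$ lie in $H_{n,i}$ within distance $\tfrac1n$ of $d_{1,i},\dots,d_{n,i}$.

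To conclude, I would use countable incompleteness: starting from sets $I_n\in\U$ with $\bigcap_nI_n=\varnothing$, replace them by $J_n=I_n\cap A_1\cap\dots\cap A_n$, which are still in $\U$, decreasing, with empty intersection, and let $n(i)$ be the largest $n$ with $i\in J_n$ (and $n(i)=0$ if there is none), so that $n(i)\to\infty$ along $\U$. Put $F_i:=H_{n(i),i}$, with $F_i:=\{0\}$ on the $\U$-null set $\{n(i)=0\}$; then $F_i\subseteq X_i$ is finite-dimensional, and for $\U$-almost all $i$ one has $\dim F_i=k_{n(i)}$ and $d(F_i,\ell_\infty^{\dim F_i})<\lambda+\tfrac2{n(i)}$, whence $\lim_{\U(i)}d(F_i,\ell_\infty^{k(i)})\le\lambda$ with $k(i)=\dim F_i$. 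Finally, for each fixed $m$, on the set $\{i:n(i)\ge m\}\in\U$ the distinguished coordinate of $d_m$ supplied by $A_{n(i)}$ lies in $F_i$ and is within $\tfrac1{n(i)}$ of $d_{m,i}$; padding by $0$ on the remaining ($\U$-null) set of indices yields a bounded family with values in $F_i$ that still represents $d_m$, so $d_m\in[F_i]_\U$. Since $[F_i]_\U$ is (the natural isometric copy of) a closed subspace of $\XiU$ and contains the dense subset $\{d_m:m\ge1\}$ of $S$, it follows that $S\subseteq[F_i]_\U$. The only point demanding care is the bookkeeping in this last step; the local-representation input and the passage from the estimates $\lambda+\tfrac1n$ to the limit bound $\lim_{\U(i)}\le\lambda$ are otherwise routine.
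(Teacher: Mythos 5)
Your proposal is correct and follows essentially the same route as the paper's proof: diagonalising the finite-dimensional $(\lambda+\tfrac1n)$-envelopes of a dense sequence through their coordinate representations in the $X_i$, and using a decreasing chain of sets from the countable incompleteness of $\U$ to define $F_i=H_{n(i),i}$ so the error terms vanish along $\U$. The only (harmless) difference is bookkeeping: the paper picks bases of the envelopes that contain the fixed representatives of the dense sequence, so the coordinates lie exactly in the $F_i$, whereas you settle for $\tfrac1n$-approximate containment and pass to the limit.
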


\begin{proof} Let us assume $S$ is an infinite-dimensional separable subspace of $\XiU$.
Let $(s^n)$ be a linearly independent sequence spanning a dense
subspace in $S$ and, for each $n$, let $(s_i^n)$ be a fixed
representative of $s^n$ in $\ell_\infty(I,X_i)$. Let
$S^n=\span\{s^1,\dots,s^n\}$. Since $\XiU$ is an $\mathscr L_{\infty,\lambda+}$-space there is, for each $n$, a finite
dimensional $F^n\subset \XiU$ containing $S^n$ with
$d(F^n,\ell_\infty^{\dim F^n })\leq \lambda+1/n$. For fixed $n$, let $(f^m)$ be a basis for $F^n$ containing
$s^1,\dots, s^n$. Choose representatives $(f_i^m)$ such that
$f_i^m=s_i^\ell$ if $f^m=s^\ell$. Moreover, let $F_i^n$ be the
subspace of $X_i$ spanned by $f_i^m$ for $1\leq m\leq \dim F^n$. Let $(I_n)$ be a decreasing sequence of subsets $I_n\in \U$ such
that  $\bigcap_{n=1}^\infty I_n=\varnothing$. For each integer $n$
put$$ J'_n=\left\{i\in I: d\left(F_i^n, \ell_\infty^{\dim F^n}\right) \leq
\lambda+2/n \right\} \cap I_n
$$
and $J_m=\bigcap_{n\leq m}J'_n$. All these sets are in $\U$. We
define a function $k:I\to \N$ as
$$
k(i)= \sup\{n:i\in J_n\}.
$$For each $i\in I$, take $F_i=F_i^{k(i)}$. This is a
finite-dimensional subspace of $X_i$ whose Banach-Mazur distance
to the corresponding $\ell_\infty^k$ is at most $\lambda+2/k(i)$.
It is clear that $[F_i]_{\mathscr U}$ contains $S$ and also that
$k(i)\to\infty$ along $\U$, which completes the proof.
\end{proof}

Recall that a Banach space $X$ is said to be a Grothendieck space
if every $c_0$-valued operator is weakly compact; equivalently, if
weak* and weak convergence for sequences in the dual space
coincide. For every set $\Gamma$ the space $\ell_\infty(\Gamma)$
is Grothendieck. One has:

\begin{prop}\label{ultra-Linf}
If $\XiU$ is an
$\mathscr L_\infty$-space, then it is a Grothendieck space.
\end{prop}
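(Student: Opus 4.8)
The plan is to use the operator form of the Grothendieck property: a Banach space is Grothendieck if and only if every operator from it into $c_0$ is weakly compact. So fix an operator $T\colon\XiU\to c_0$ and suppose, towards a contradiction, that it is not weakly compact. Since $\XiU$ is an $\mathscr L_\infty$-space it is an $\mathscr L_{\infty,\lambda+}$-space for some $\lambda\geq 1$, so Lemma~\ref{S} is available. By the Eberlein--\v{S}mulian theorem there is a bounded sequence $(x^m)$ in $\XiU$ whose image $(Tx^m)$ has no weakly convergent subsequence; then $Y_0=\overline{\span}\{x^m:m\in\N\}$ is separable and $T|_{Y_0}$ is again not weakly compact. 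By Lemma~\ref{S}, $Y_0$ lies inside a subspace of the form $[F_i]_\U$ with $F_i\subset X_i$ finite dimensional, $k(i):=\dim F_i\to\infty$ along $\U$ and $\lim_{\U(i)}d(F_i,\ell_\infty^{k(i)})\leq\lambda$, and $T$ is still not weakly compact on $[F_i]_\U$. Hence it suffices to prove that every ultraproduct of this particular shape is a Grothendieck space.

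The next step is to make such an ultraproduct concrete. Choosing, for $\U$-almost every $i$, an isomorphism $v_i\colon F_i\to\ell_\infty^{k(i)}$ with $\|v_i\|\leq 1$ and $\|v_i^{-1}\|\leq\lambda+1$ (and $v_i=0$ on the remaining, $\U$-null, set of indices), one checks routinely that the ultraproduct operator $[v_i]_\U$ is an isomorphism of $[F_i]_\U$ onto $[\ell_\infty^{k(i)}]_\U$. Since each $\ell_\infty^{k(i)}$ is a $C$-space and the class of $C$-spaces is stable under ultraproducts (see the Preliminaries and \cite{heinrichL1}), $[\ell_\infty^{k(i)}]_\U$ is isometric to $C(K)$, where $K$ is the ultracoproduct of the finite discrete spaces of cardinality $k(i)$. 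As the Grothendieck property is an isomorphic invariant, everything reduces to showing that this $C(K)$ is a Grothendieck space.

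Here is the crux, and it is where countable incompleteness of $\U$ is essential: the compact space $K$ is an \emph{$F$-space}, i.e.\ any two disjoint cozero subsets of $K$ have disjoint closures. To see this, represent elements of $C(K)=[C(K_i)]_\U$ by bounded families over $I$ (the $K_i$ being finite discrete); given disjoint cozero sets determined by $g=[(g_i)]$ and $h=[(h_i)]$ with $g\cdot h=0$, fix a decreasing sequence $(I_n)$ in $\U$ with $\bigcap_n I_n=\varnothing$, shrink it by the sets where $|g_i|\wedge|h_i|$ is small, and build from it a ``slowly increasing'' index function $i\mapsto n(i)$ which is finite at every $i$ but tends to $\infty$ along $\U$; this lets one interpolate coordinatewise a function $f=[(f_i)]\in C(K)$ equal to $1$ on the closure of the first cozero set and $0$ on the closure of the second, the interpolation becoming exact as $n(i)$ grows. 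Once $K$ is known to be an $F$-space, one invokes the classical theorem (Seever) that $C(K)$ is a Grothendieck space whenever $K$ is an $F$-space, and the proof is complete.

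I expect the diagonalization establishing the $F$-space property to be the only genuinely non-formal step: it is precisely the device --- a slowly growing index function supplied by countable incompleteness --- that allows one to ``sum'' a locally bounded family inside the ultraproduct, and some care is needed to guarantee that the family so produced stays uniformly bounded on each coordinate. (One could also avoid the topological detour: by \cite{accgm1} ultraproducts of $\mathscr L_\infty$-spaces are universally separably injective, and universally separably injective spaces are Grothendieck; or one can argue directly with measures on $K$, disjointifying a hypothetical weak$^*$-null non-weakly-null sequence by Rosenthal's lemma and then using the same diagonalization together with the lattice structure of $C(K)$ to reach a contradiction.) Everything else --- the reformulation via $c_0$-valued operators, the separable reduction through Lemma~\ref{S}, and the identification $[F_i]_\U\cong C(K)$ --- is routine.
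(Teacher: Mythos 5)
Your proof is correct, but its decisive step is genuinely different from the paper's. The first half is the same reduction: as in the paper, Lemma~\ref{S} (together with the uniform bound on the Banach--Mazur distances, which makes $[F_i]_{\U}$ isomorphic to $[\ell_\infty^{k(i)}]_{\U}$, exactly as you argue) reduces everything to showing that the ultraproducts $[\ell_\infty^{k(i)}]_{\U}$ are Grothendieck; your Eberlein--\v{S}mulian argument merely makes explicit what the paper dismisses as ``fairly obvious''. From there the paper finishes in one line: $[\ell_\infty^{k(i)}]_{\U}$ is, by the very definition of the ultraproduct, a quotient of $\ell_\infty(I,\ell_\infty^{k(i)})=\ell_\infty(\Gamma)$, and quotients of Grothendieck spaces are Grothendieck. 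You instead write $[\ell_\infty^{k(i)}]_{\U}\approx C(K)$ with $K$ the ultracoproduct of finite discrete spaces, show that $K$ is an $F$-space, and invoke Seever's theorem. That route is sound, and your sketched interpolation does close: if the two disjoint cozero sets are those of $g=[(g_i)]$ and $h=[(h_i)]$ with nonnegative representatives, then $\lim_{\U}\|g_ih_i\|=0$, and choosing $n(i)<\infty$ with $n(i)\to\infty$ along $\U$ so slowly that $n(i)\|g_ih_i\|\to 0$ (your countable-incompleteness device) the functions $f_i=\min(1,n(i)g_i)$ define $f$ with $0\le f\le 1$, $fg=g$ and $fh=0$ (since $\|(1-f_i)g_i\|\le 1/n(i)$ and $\|f_ih_i\|\le n(i)\|g_ih_i\|$), so $f\equiv 1$ on the closure of one cozero set and $f\equiv 0$ on the closure of the other. (For these particular, finite discrete, factors incompleteness is not even needed there: take $f_i=\min\bigl(1,g_i/\sqrt{\|g_ih_i\|}\bigr)$, and the characteristic function of the support of $g_i$ at the indices where $g_ih_i=0$; as with the paper's quotient argument, countable incompleteness is really only consumed by Lemma~\ref{S}.) The trade-off is clear: the paper's argument is essentially free, using nothing beyond the definition of the ultraproduct and the Grothendieck property of $\ell_\infty(\Gamma)$, whereas yours costs Seever's theorem plus the diagonalization, but it buys structural information --- the ultracoproduct is an $F$-space, so the same proof gives Grothendieckness of $[C(K_i)]_{\U}$ for arbitrary compact factors --- and it is close in spirit to the alternative route through universal separable injectivity \cite{accgm1} that the paper itself mentions.
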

\begin{proof} It is fairly obvious that a Banach space $X$ in
which every separable subspace is contained in a Grothendieck
subspace of $X$ must be a Grothendieck space. Thus, in view of Lemma \ref{S}, everything one
needs is to show that all spaces $[\ell_\infty^{n(i)}]_{\mathscr
U}$ are Grothendieck spaces. But this follows from the definition
of the ultraproduct space as $[\ell_\infty^{n(i)}]_{\mathscr U}$
as a quotient of $\ell_\infty(\ell_\infty^{n(i)}) =
\ell_\infty(\Gamma)$ and the simple fact that quotients of Grothendieck
spaces are Grothendieck spaces.
\end{proof}

Therefore,  ultraproducts which are $\mathscr L_{\infty}$-spaces
cannot contain infinite dimensional separable complemented subspaces, in particular,
$c_0$. This shows that Stern's claim that $c_0((c_0)_{\mathscr
U})$ is isometric to a complemented subspace of $(c_0)_{\mathscr
U}$ cannot be true since $c_0((c_0)_{\mathscr U})$ obviously contains
complemented copies of $c_0$. If we focus our attention on copies of $c_0$, we
can present a much more general result, which improves Corollary
3.14 of Henson and Moore in \cite{hensonmoore83}.

\begin{prop}\label{coout}
No ultraproduct of Banach spaces over a countably incomplete
ultrafilter contains a complemented subspace isomorphic to $c_0$.
\end{prop}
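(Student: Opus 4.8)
The plan is to argue by contradiction and reduce the statement to the fact, recalled just before Proposition~\ref{ultra-Linf}, that $\ell_\infty$ is a Grothendieck space. Suppose $E:=\XiU$ contains a complemented subspace isomorphic to $c_0$, witnessed by an isomorphic embedding $j\colon c_0\to E$ and a bounded projection $\pi\colon E\to j(c_0)$; set $q:=j^{-1}\pi\colon E\to c_0$, so that $qj=\mathrm{id}_{c_0}$. The crucial point will be to \emph{extend $j$ to a bounded operator $\hat\jmath\colon\ell_\infty\to E$} (with no control needed beyond boundedness). Granting this, $q\hat\jmath\colon\ell_\infty\to c_0$ is a $c_0$-valued operator on the Grothendieck space $\ell_\infty$, hence weakly compact, so the set $q\hat\jmath(B_{\ell_\infty})$ is relatively weakly compact in $c_0$; but, viewing $c_0\subseteq\ell_\infty$, we have $q\hat\jmath|_{c_0}=qj=\mathrm{id}_{c_0}$, whence $B_{c_0}=q\hat\jmath(B_{c_0})\subseteq q\hat\jmath(B_{\ell_\infty})$ is relatively weakly compact, i.e.\ $c_0$ is reflexive --- which is absurd. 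Thus everything reduces to the following extension property, which I would isolate as a lemma of independent interest: \emph{for every countably incomplete ultrafilter $\mathscr U$ and every ultraproduct $E=\XiU$, every bounded operator $T\colon c_0\to E$ extends to a bounded operator $\ell_\infty\to E$.}

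To prove the lemma, write $u_n:=Te_n$ and fix representatives $(u_n^i)_{i\in I}$ of $u_n$. For each $N$ put $c_i^N:=\max\{\|\sum_{n\le N}\varepsilon_n u_n^i\|:\varepsilon\in\{-1,1\}^N\}$; since $\|\sum_{n\le N}\varepsilon_n u_n\|=\|T(\sum_{n\le N}\varepsilon_n e_n)\|\le\|T\|$ and there are finitely many sign vectors, one gets $\lim_{\mathscr U(i)}c_i^N\le\|T\|$, and therefore $\{i:c_i^N\le\|T\|+1\}\in\mathscr U$. Fixing a decreasing sequence $(I_n)$ in $\mathscr U$ with $\bigcap_n I_n=\varnothing$, replace it by the refinement $I_n':=I_n\cap\{i:c_i^n\le\|T\|+1\}$ --- still decreasing, still in $\mathscr U$, still with empty intersection --- and define $n(i):=\sup\{n:i\in I_n'\}$, so that $n(i)\to\infty$ along $\mathscr U$ while $c_i^{\,n(i)}\le\|T\|+1$ for every $i$. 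Then $\hat T\big((a_n)_n\big):=\big[\big(\sum_{n\le n(i)}a_n u_n^i\big)_{i\in I}\big]$ is a bounded linear operator $\ell_\infty\to E$: it is plainly linear, and for each $a\in\ell_\infty$ and each $i$ the vector $\sum_{n\le n(i)}a_n u_n^i$ lies in $\|a\|_\infty$ times the convex hull of $\{\sum_{n\le n(i)}\varepsilon_n u_n^i\}_\varepsilon$, so $\|\sum_{n\le n(i)}a_n u_n^i\|\le(\|T\|+1)\|a\|_\infty$ and $\|\hat T\|\le\|T\|+1$. Finally $\hat T$ extends $T$: for $a\in c_0$ and each $N$, the estimate $\|\sum_{n\le n(i)}a_n u_n^i-\sum_{n\le N}a_n u_n^i\|=\|\sum_{N<n\le n(i)}a_n u_n^i\|\le(\|T\|+1)\sup_{n>N}|a_n|$ holds whenever $n(i)\ge N$, so $\|\hat T(a)-\sum_{n\le N}a_n u_n\|\le(\|T\|+1)\sup_{n>N}|a_n|\to0$, i.e.\ $\hat T(a)=\sum_n a_n u_n=T(a)$.

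Applying the lemma to $T=j$ and running the argument of the first paragraph yields the desired contradiction, so no ultraproduct over a countably incomplete ultrafilter contains a complemented copy of $c_0$. I expect the lemma to be the only real obstacle, and inside its proof the one delicate step is the passage from ``for each $N$ the inequality $c_i^N\le\|T\|+1$ holds for $\mathscr U$-almost every $i$'' to ``the single diagonal function $i\mapsto n(i)$ satisfies $c_i^{\,n(i)}\le\|T\|+1$ for \emph{every} $i$'': this is exactly the refinement of the witnessing sequence $(I_n)$ to $(I_n')$, and it is the only place where countable incompleteness of $\mathscr U$ enters. (Note the lemma says a little more than is needed: every countably incomplete ultraproduct is, in this sense, ``$(c_0,\ell_\infty)$-extensible''.)
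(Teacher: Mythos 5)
Your argument is correct, and it follows a genuinely different route from the paper's, although the diagonal construction at its core is the same. The paper truncates the image of the $c_0$-basis coordinatewise so as to factor the embedding $\imath\colon c_0\to [X_i]_{\mathscr U}$ through an intermediate ultraproduct $[c_0^{k(i)}]_{\mathscr U}$ of spaces $\ell_\infty^{k}$ (or $c_0$ when $k(i)=\infty$), and then kills the putative projection by appealing to Proposition~\ref{ultra-Linf} (ultraproducts which are $\mathscr L_\infty$-spaces are Grothendieck, which in turn rests on Lemma~\ref{S}). You perform the same truncation but, by intersecting with the sequence witnessing countable incompleteness, keep $n(i)$ finite, which lets you extend the embedding itself to an operator $\ell_\infty\to[X_i]_{\mathscr U}$; after composing with the left inverse, the contradiction only needs the classical fact that $\ell_\infty$ is a Grothendieck space (or, even more simply, Phillips' theorem that $c_0$ is uncomplemented in $\ell_\infty$, which makes the reflexivity detour unnecessary). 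So your proof is independent of Lemma~\ref{S} and Proposition~\ref{ultra-Linf} and isolates a lemma of independent interest --- every operator from $c_0$ into an ultraproduct over a countably incomplete ultrafilter extends to $\ell_\infty$ --- a relative, valid for arbitrary ultraproducts, of the universal separable injectivity results quoted from \cite{accgm1}; the paper's route instead reuses machinery it develops anyway and records the stronger structural fact about $\mathscr L_\infty$-ultraproducts. Two cosmetic points in your lemma: set $n(i)=0$ when $\{n: i\in I_n'\}$ is empty, and note that the decreasingness of $(I_n')$ --- which implicitly uses that $c_i^N$ is nondecreasing in $N$, true by averaging over the signs of $u_{N+1}^i$ --- is not actually needed: $I_n'\in\mathscr U$, $I_n'\subseteq I_n$ and the attainment of the finite supremum already give $c_i^{\,n(i)}\le\|T\|+1$ and $\{i:n(i)\ge N\}\supseteq I_N'$.
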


\begin{proof}
%Let $(X_i)_{i\in I}$ be a family of Banach spaces.
Assume
$[X_i]_\U$ has a subspace isomorphic to $c_0$, complemented or
not, and let $\imath:c_0\to [X_i]_\U$ be the corresponding
embedding.

Let $f^n=\imath(e_n)$, where $(e_n)$ denotes the traditional basis
of $c_0$, and let $(f_i^n)$ be a representative of $f^n$ in
$\ell_\infty(I, X_i)$, with $\|(f_i^n)\|_\infty=\|f^n\|$. Then we
have
$$
\|\imath^{-1}\|^{-1}\|(t_n)\|_\infty\leq \|\sum_nt_n f^n\|\leq
\|\imath\|\|(t_n)\|_\infty,
$$
for all $(t_n)$ in $c_0$. Fix $0<c<\|\imath^{-1}\|^{-1}$ and
$\|\imath\|<C$ and, for $k\in \N$ define
$$
J_k=\left\{i\in I: c\|(t_n)\|_\infty\leq \|\sum_{n=1}^k
t_nf^n_i\|_{X_i}\leq C\|(t_n)\|_\infty \text{ for all }
(t_n)\in\ell_\infty^k\right\}.
$$

It is easily seen that $J_k$ belongs to $\U$ for all $k$.
Moreover, $J_1=I$ and $J_{k+1}\subset J_k$ for all $k\in N$. Now,
for each $i\in I$, define $k:I\to\N\cup\{\infty\}$ taking
$k(i)=\sup\{n:i\in J_n\}$.

Let us consider the ultraproduct $[c_0^{k(i)}]_\U$, where
$c_0^k=\ell_\infty^k$ when $k$ is finite and $c_0^k=c_0$ for
$k=\infty$. We define operators $\jmath_i:c^{k(i)}_0\to X_i$
taking $\jmath_i(e_n)=f^n_i$ for $1\leq n\leq k(i)$ for finite
$k(i)$ and for all $n$ if $k(i)=\infty$. These are uniformly
bounded and so they define an operator $\jmath:[c_0^{k(i)}]_\U\to
\XiU$. Also, we define $\kappa:c_0\to [c_0^{k(i)}]_\U$ taking
$\kappa(x)=[(\kappa_i(x))]$, where $\kappa_i$ is the obvious
projection of $c_0$ onto $c_0^{k(i)}$. We claim that
$\jmath\kappa=\imath$. Indeed, for $n\in\N$, we have
$\kappa_i(e_n)=e_n$ (at least) for all $i\in J_n$ and since
$J_n\in \U$ we have $\jmath\circ \kappa(e_n)= \imath(e_n)$ for all
$n\in\N$. Now, if $p:\XiU\to c_0$ is a projection for $\imath$,
that is, $p\imath$ is the identity on ${c_0}$, then $p\jmath$ is a projection
for $\kappa:c_0\to [c_0^{k(i)}]_\U$, which cannot be since the
latter is a Grothendieck space.
\end{proof}

As we mentioned in the introduction, if an ultraproduct $E$ is an $\mathscr L_\infty$-space then it is
 universally separably injective in the following sense: for
every Banach space $X$ and each separable subspace $Y\subset X$,
every operator $t:Y\to E$ extends to an operator $T:X\to E$; see \cite[Theorem 4.10]{accgm1}. In
spite of this fact, infinite dimensional
ultraproducts via a countably incomplete ultrafilters are never
injective (a Banach space $E$ is said to be injective when
$E$-valued operators can be extended to any superspace). We give
the proof here because Henson-Moore proof in \cite[Theorem
2.6]{hensonmoore} is written in the language of nonstandard
analysis and Sims' version for ultraproducts along Section~8 of
\cite{sims} is not very accessible.

\begin{teor}[Henson and Moore]\label{neverinj}
Ultraproducts via countably incomplete ultrafilters are never
injective, unless they are finite dimensional.
\end{teor}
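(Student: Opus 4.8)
The plan is to suppose for contradiction that $E=[X_i]_\U$ is infinite dimensional and injective, and derive a contradiction from the tension between injectivity and the failure of $c_0$ to be complemented (Proposition~\ref{coout}). The standard route to this kind of statement runs through Rosenthal's theorem: an injective Banach space must contain a copy of $\ell_\infty$ (indeed injective spaces are $\mathscr L_\infty$-spaces, and an infinite dimensional injective space, being complemented in some $\ell_\infty(\Gamma)$ by an isomorphic projection, contains $\ell_\infty$). So the first step is to locate an isometric or isomorphic copy of $\ell_\infty$ inside $E$. Once $\ell_\infty\hookrightarrow E$ is fixed, the key classical input is that \emph{every} copy of $\ell_\infty$ inside a Banach space that is injective is automatically complemented there --- this is Rosenthal's characterization, or alternatively one uses that $\ell_\infty$ is itself injective, so whenever $\ell_\infty$ sits as a subspace of \emph{any} space the injectivity of $\ell_\infty$ gives a projection onto it only if the ambient space is where the extension lives; the cleaner statement to invoke is: if $E$ is injective and $Z\subset E$ with $Z$ injective, then $Z$ is complemented in $E$ (extend the identity $Z\to Z$ using injectivity of $Z$). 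Hence $E$ would contain a complemented copy of $\ell_\infty$, and since $c_0$ is complemented in $\ell_\infty$, transitivity of complementation would force $c_0$ to be complemented in $E$, contradicting Proposition~\ref{coout}.

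Carrying this out in order: (1) assume $E=[X_i]_\U$ is infinite dimensional and injective; (2) recall that injective spaces are $\mathscr L_{\infty,\lambda}$-spaces for some $\lambda$, hence isomorphic to a complemented subspace of some $\ell_\infty(\Gamma)$ via an isomorphic embedding $j:E\to\ell_\infty(\Gamma)$ with a bounded projection $P:\ell_\infty(\Gamma)\to j(E)$; (3) since $E$ is infinite dimensional, $j(E)$ is an infinite dimensional complemented subspace of $\ell_\infty(\Gamma)$, and by the Lindenstrauss--Rosenthal-type structure theory (or directly: an infinite dimensional $\mathscr L_\infty$-space that is complemented in an $\ell_\infty(\Gamma)$ contains a copy of $\ell_\infty$), we obtain an isomorphic embedding of $\ell_\infty$ into $E$; (4) invoke injectivity of $\ell_\infty$ together with injectivity of $E$ to complement that copy of $\ell_\infty$ in $E$; (5) compose with the canonical projection $\ell_\infty\to c_0$ to get a complemented copy of $c_0$ in $E$; (6) contradict Proposition~\ref{coout}.

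The main obstacle --- and the step that needs the most care --- is step (3): showing that an infinite dimensional injective space necessarily contains $\ell_\infty$ (not merely $c_0$). One way to see this without heavy machinery: $E$ infinite dimensional and $\mathscr L_\infty$ implies $E^*$ is an $L_1(\mu)$ predual complemented situation... but the clean argument is that $E$ contains an isomorphic copy of $c_0$ (every infinite dimensional $\mathscr L_\infty$-space does), and then inside an injective space a copy of $c_0$ can be ``blown up'' to a copy of $\ell_\infty$ by a theorem of Rosenthal: in an injective space, every infinite dimensional subspace contains a further subspace isomorphic to $\ell_\infty$ on which there is a projection; equivalently, injective spaces contain $\ell_\infty$ isomorphically whenever they are infinite dimensional. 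I would state this as a cited fact (Rosenthal, or Lindenstrauss's lecture notes on injective spaces) rather than reprove it. An alternative, perhaps more self-contained line, is to use that an injective $E$ is $1$-complemented in its bidual and that $c_0\subset E$ has closure of its bidual $\ell_\infty$ inside $E^{**}$; pushing $\ell_\infty$ back into $E$ via the projection $E^{**}\to E$ gives an operator $\ell_\infty\to E$ that is an isomorphism on $c_0$, and then a classical lifting argument (using the separable injectivity already available, or the Rosenthal ``disjointly supported'' technique) upgrades this to an isomorphic embedding of all of $\ell_\infty$. Either way, once $\ell_\infty\hookrightarrow E$ is secured, the rest is a two-line complementation argument and an appeal to Proposition~\ref{coout}.
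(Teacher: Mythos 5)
Your argument collapses at the final step: you claim that once $E$ contains a complemented copy of $\ell_\infty$, ``since $c_0$ is complemented in $\ell_\infty$'' you get a complemented copy of $c_0$ in $E$ and hence a contradiction with Proposition~\ref{coout}. But $c_0$ is \emph{not} complemented in $\ell_\infty$ --- this is the Phillips--Sobczyk theorem, and it is even used inside the paper's own proof of Theorem~\ref{neverinj} (``$I_S$ cannot map $\ell_\infty(S)$ to $c_0(S)$ since $c_0$ is not complemented in $\ell_\infty$''). So no contradiction with Proposition~\ref{coout} is ever reached. Worse, the gap is not repairable within your strategy: the only obstructions you have available (the ultraproduct is Grothendieck, contains no complemented $c_0$, contains a complemented copy of $\ell_\infty$) are all satisfied by $\ell_\infty(\Gamma)$ itself, which \emph{is} injective. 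Any proof of non-injectivity must therefore use something finer than Propositions~\ref{ultra-Linf} and~\ref{coout}. (Your steps (1)--(4) are fine: injective spaces are $\mathscr L_\infty$-spaces, infinite dimensional injective spaces contain $\ell_\infty$ by Rosenthal, and an injective subspace is always complemented by extending its identity; it is only the passage from complemented $\ell_\infty$ to complemented $c_0$ that is false.)

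For comparison, the paper's proof does something genuinely different: using Lemma~\ref{S} it reduces to showing that $U=[\ell_\infty^{k(i)}]_\U$ (which sits complemented in the ultraproduct) is not injective, sandwiches it as $c_0(\Gamma)\subset U\subset \ell_\infty(\Gamma)$ where $\Gamma$ is the set-theoretic ultraproduct $\langle\{1,\dots,k(i)\}\rangle_\U$, and then shows by a combinatorial gliding-hump argument with internal subsets of $\Gamma$ that the inclusion $c_0(\Gamma)\hookrightarrow U$ cannot even be extended to the space $\ell_\infty^c(\Gamma)$ of countably supported bounded families; the point is that $U$, being spanned by characteristic functions of internal sets, is ``too thin'' between $c_0(\Gamma)$ and $\ell_\infty(\Gamma)$ to absorb such an extension. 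A softer variant in the spirit of what you attempted does exist, but only for countable index sets: as in Remark (a) following the theorem, if $U$ were injective then, containing $c_0(\Gamma)$, it would contain $\ell_\infty(\Gamma)$ by Rosenthal's theorem, and one gets a contradiction by comparing density characters ($\mathfrak c$ versus $2^{\mathfrak c}$), not by complementing $c_0$. If you want to salvage your write-up, that cardinality argument is the correct replacement for your steps (5)--(6), at the price of restricting to countable $I$.
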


\begin{proof}
Recalling that injective Banach spaces are $\mathscr L_\infty$-spaces, assume  that $\XiU$ is a $\mathscr
L_\infty$-space. According to Lemma~\ref{S}, if $\XiU$ is infinite
dimensional, it contains some infinite dimensional complemented
subspace isomorphic to $[\ell_\infty^{k(i)}]_{\U}$. Thus, it suffices to see that the later is not an injective space.

Let $(S_i)_{i\in I}$ be a family of sets and $\U$ an ultrafilter
on $I$. The set-theoretic ultraproduct $\langle S_i\rangle_\U$ is
the product set $\prod_iS_i$ factored by  the equivalence relation
$$
(s_i)\equiv(t_i)\Longleftrightarrow \{i\in I:s_i=t_i\}\in \U.
$$
The class of $(s_i)$ in $\langle S_i\rangle_\U$ is denoted
$\langle(s_i)\rangle$. Let thus $\langle \{1, \dots ,
k(i)\}\rangle_\U$ denote the set-theoretic ultraproduct of the
sets $\{1, \dots , k(i)\}$.  We have
\begin{equation}\label{inclusions}
c_0(\langle \{1, \dots , k(i)\}\rangle_\U )\subset
[\ell_\infty^{k(i)}]_{\U}\subset \ell_\infty(\langle \{1, \dots ,
k(i)\}\rangle_\U ).
\end{equation}
This should be understood as follows: each $[(f_i)]\in [\ell_\infty^{k(i)}]_{\U} $ defines a
function on $\langle \{1, \dots , k(i)\}\rangle_\U$ by the formula
$f\langle (x_i)\rangle_{\U}=\lim_{\U(i)} f_i(x_i)$. In this way,
$[\ell_\infty^{k(i)}]_{\U}$ embeds isometrically as a subspace of
$\ell_\infty(\langle \{1, \dots , k(i)\}\rangle_\U)$ containing $
c_0(\langle \{1, \dots , k(i)\}\rangle_\U)$. Write $\Gamma=\langle \{1, \dots , k(i)\}\rangle_\U$ and
$U=[\ell_\infty^{k(i)}]_\U$, so that (\ref{inclusions}) becomes
$c_0(\Gamma)\subset U\subset \ell_\infty(\Gamma)$. We will prove
that the inclusion of $c_0(\Gamma)$ into $U$ cannot be extended to
$ \ell_\infty^c(\Gamma)$,  the space of all countably supported
bounded families on $\Gamma$.

Recall that an internal subset of $\Gamma$ is one of the form
$\langle A_i\rangle_\U$, where $A_i\subset \{1,\dots,k(i)\}$ for
each $i\in I$. Infinite internal sets  must have cardinality at
least $\frak c$  ---just use an almost disjoint family. This is
the basis of the ensuing argument: as $U$ is spanned by the
characteristic functions of the internal sets, if $f\in U$ is not
in $c_0(\Gamma)$, then there is $\delta>0$ and an infinite
internal $A\subset \Gamma$ such that $|f|\geq \delta$ on $A$.

Suppose $I:\ell_\infty^c(\Gamma)\to
U$ is an operator extending the inclusion of $c_0(\Gamma)$ into
$U$. Given a countable $S\subset \Gamma$, let us consider
$\ell_\infty(S)$ as the subspace of $\ell_\infty^c(\Gamma)$
consisting of all functions vanishing outside $S$ and let us write
$I_S$ for the endomorphism of $\ell_\infty(S)$ given by $I_S(f)=
1_S I(f)$, where $1_S$ is the characteristic function of $S$.
Notice that $I_S$ cannot map $\ell_\infty(S)$ to
$c_0(S)$ since $c_0$ is not complemented in $\ell_\infty$. Thus,
given an infinite countable $S\subset \Gamma$, there is  a norm
one $f\in \ell_\infty(S)$ (the characteristic function of a
countable subset of $S$, if you prefer), a number $\delta>0$ and
an infinite internal $A\subset\Gamma$ such that $|I(f)|\geq
\delta$ on $A$, with $|A\cap S|=\aleph_0$. Let $\beta(S)$ denote
the supremum of the numbers $\delta$ arising in this way. Also, if
$T$ is any subset of $\Gamma$, put $\beta[T]=\sup\{\beta(S):
S\subset T, |S|=\aleph_0\}$.

Let $S_1$ be a countable set such that $\beta(S_1)>{1\over
2}\beta[\Gamma]$ and let us take $f_1\in\ell_\infty(S_1)$ such that
$|I(f_1)|> {1\over 2}\beta(S_1)$ on an infinite internal set $A^1$
with $|A^1\cap S_1|=\aleph_0$.

Let $S_2$ be a countable  subset of $A^1\backslash S_1$ (notice $|A^1\backslash S_1|\geq \frak c$) such that $\beta(S_2)>{1\over 2}\beta[A^1\backslash S_1]$ and take a norm one $f_2\in \ell_\infty(S_2)$ such that $|I(f_2)|\geq {1\over 2}\beta(S_2)$ on an infinite internal set $A^2\subset A^1$ with $|A^2\cap S_2|=\aleph_0$. % As $A^2=A\cap A^1$ is infinite (and internal) we have in fact $|A^2|\geq \frak c$.

Let $S_3$ be an infinite countable subset of $A^2\backslash
(S_1\cup S_2)$ such that $\beta(S_3)>{1\over 2}\beta[A^2\backslash
(S_1\cup S_2)]$ and take a normalized $f_3\in\ell_\infty(S_3)$
such that $|If_3|>{1\over 2}\beta(S_3)$ on certain internal
$A^3\subset A^2$ such that $|A^3\cap S_3|=\aleph_0$ and so on.

Continuing in this way we get sequences $(S_n), (f_n)$ and $
(A^n)$, where
\begin{itemize}
\item Each $A^n$ is an infinite internal subset of $\Gamma$. \item
$A^0=\Gamma$ and  $A^{n+1}\subset A^n$ for all $n$. \item
$S_{n+1}$ is a countable subset of $A^{n}\backslash
\bigcup_{m=1}^n S_m$, and $\beta(S_{n+1})>{1\over
2}\beta[A^{n}\backslash \bigcup_{m=1}^n S_m]$. \item $f_n$ is a
normalized function in $\ell_\infty(S_n)$. \item $|If_n|> {1\over
2}\beta(S_n)$ on $A^n$. \item For each $n$ one has $|A^n\cap
S_n|=\aleph_0$.
\end{itemize}
Our immediate aim is to see that $\beta(S_n)$ converges to zero.
Fix $n$ and consider any $a\in A^{n+1}$ to define
$$
h_n=\sum_{m= 1}^n \sg(If_m(a)) f_m.
$$
Clearly, $\|h_n\|= 1$ since the $f_m$'s have disjoint supports. On
the other hand,
$$
\|I\|\geq \|Ih_n\|\geq Ih_n(a)=\sum_{m=1}^n |If_m(a)|\geq
\frac{1}{2}\sum_{m=1}^n\beta(S_m),
$$
so $(\beta(S_n))$ is even summable.

For each $n\in\N$, choose a point $a_n\in S_n$ and consider the
set $S=\{a_n:n\in \N\}$. We achieve the final contradiction by
showing that $I_S$ maps $\ell_\infty(S)$ to $c_0(S)$, thus
completing the proof. Indeed, pick $f\in\ell_\infty(S)$ and let us
compute $\d(1_SI(f),c_0(S))$. For each $n\in\N$, set
$R_n=\{a_m:m\geq n\}$. We have $f=1_{R_n}f+(1_S-1_{R_n})f$ and
since $S\backslash R_n$ is finite,
$If=I1_{R_n}f+I((1_S-1_{R_n})f)=I1_{R_n}f+(1_S-1_{R_n})f$.
Moreover, the function $1_{R_n}f$ has countable support contained
in  $A^{n}\backslash \bigcup_{m=1}^n S_m$. So,
$$
\begin{aligned}
\d(1_SIf,c_0(S))&= \d(1_SI1_{R_n}f,c_0(S))\\
&\leq  \d(1_{R_n}I1_{R_n}f,c_0(R_n))+\d(1_{S\backslash R_n}I1_{R_n}f,c_0(S\backslash R_n))\\
&=  \d(1_{R_n}I1_{R_n}f,c_0(R_n))\\
&\leq \|1_{R_n}f\|\beta(R_n)\\
&\leq \|f\|\beta\left[ A^{n}\backslash \bigcup_{m=1}^n S_m \right]\\
&\leq 2\|f\|\beta(S_{n+1}).
\end{aligned}
$$
And since $\beta(S_{n+1})\to 0$ we are done.
\end{proof}

\begin{remarks}
(a) Let us give a simpler proof of Theorem~\ref{neverinj} for ``countable'' ultraproducts. The ensuing argument relies on
Rosenthal's result \cite[Corollary 1.5]{rose} asserting that an injective Banach
space containing $c_0(\Gamma)$ contains $\ell_\infty(\Gamma)$ as
well. Suppose $I$ countable. Then $[\ell_\infty^{k(i)}]_\U$ is a
quotient of $\ell_\infty$, and so its density character is (at
most) the continuum. On the other hand, if
$[\ell_\infty^{k(i)}]_\U$ is infinite dimensional, then
$\lim_{\U(i)}k(i)=\infty$, and using an almost disjoint family we
see that the cardinality of $\Gamma=\langle \{1, \dots ,
k(i)\}\rangle_\U$ equals the continuum. Thus, if
$[\ell_\infty^{k(i)}]_\U$ were injective, as it contains $c_0(\Gamma)$,  it should contain a copy
of $\ell_\infty(\langle \{1, \dots , k(i)\}\rangle_\U)$, which is
not possible, because the later space has density character
$2^\mathfrak c$.
\smallskip

(b) Leung and
R\"abiger proved in \cite{leura} that given a family
$(E_i)_{i\in I}$ of Banach spaces containing no complemented copy of
$c_0$, the space $\ell_\infty(I,E_i)$ does not contain a
complemented copy of $c_0$ if the cardinal of $I$ is not real-valued measurable (that is, every countably additive measure defined on the power set of $I$ and vanishing on every singleton is zero), in particular if $I$ is countable. This implies that
when $I$ has non-real-valued measurable cardinal, the ultraproduct
$(E_i)_{\mathscr U}$ of a family $(E_i)_{i\in I}$ of Lindenstrauss
Grothendieck spaces is a Grothendieck space.
\smallskip

(c) It is a challenging problem in set theory to decide if measurable cardinals exist, that is, if some set can ever support a countably  complete, free ultrafilter. In any case such a cardinal should be very, very large; see \cite[Section 4.2]{c-k}. However ultraproducts based on countably complete ultrafilters should not be very interesting to us. In fact, if $\U$ is countably complete and $|X|$ is less that the least uncountable measurable cardinal, then $X_\U=X$ in the sense that the diagonal embedding is onto. This is so because if $\U$ is countably complete, one has $\langle X_i\rangle_\U= \XiU$ for all families of Banach spaces in view of the
remark following Definition~\ref{CI} and the diagonal embedding of $X$ into $\langle X\rangle_\U$ is onto according to \cite[Corollary 4.2.8]{c-k}.
\end{remarks}

\section{Isomorphic equivalence}\label{u}

As we mentioned before, the study of the isometric equivalence of ultrapowers goes back to the inception of the ultraproduct construction in Banach space theory and has produced many interesting results in the ``model theory of Banach spaces''. In this Section we will rather consider the isomorphic variation introduced by Henson and Moore
\cite[p.106]{hensonmoore83}.

\begin{definition}
We say that two Banach spaces $X$ and $Y$ are ultra-isomorphic (respectively, ultra-isometric) and we write $X\stackrel{u}\sim Y$ (respectively, $X\stackrel{u}\approx Y$) to short
if there is an ultrafilter $\mathscr U$ such that $X_\U$ and $Y_\U$ are isomorphic (respectively, isometric).
\end{definition}

Sometimes we will say that $X$ and $Y$ have the same ultratype. The following observation shows that  ``having the same
ultratype''  provides a true equivalence relation.

\begin{lema}
$X$ and $Y$ are ultra-isomorphic if (and only if) there are ultrafilters $\mathscr U$ and $\mathscr V$ such that $X_\mathscr U$ and $Y_\mathscr V$ are isomorphic.
\end{lema}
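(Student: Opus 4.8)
The plan is to isolate the trivial half and reduce the substantial half to a known local-to-global principle. The ``only if'' implication is immediate: if $X_\mathscr W\sim Y_\mathscr W$ for some ultrafilter $\mathscr W$, take $\mathscr U=\mathscr V=\mathscr W$. So assume $X_\mathscr U\sim Y_\mathscr V$ for ultrafilters $\mathscr U$ on $I$ and $\mathscr V$ on $J$, and set $\lambda=d(X_\mathscr U,Y_\mathscr V)$. I would first show that $X$ and $Y$ are crudely finitely representable in each other, and then quote the characterization of the relation $\stackrel{u}\sim$.

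\emph{Step 1 (local statement).} Let $F\subset X$ be finite dimensional. The diagonal embedding $X\hookrightarrow X_\mathscr U$ is isometric, since $\|[(x)_i]\|=\lim_\mathscr U\|x\|$; composing it on $F$ with an isomorphism $X_\mathscr U\to Y_\mathscr V$ of distortion $<\lambda+\e$ we obtain that $F$ is $(\lambda+\e)$-isomorphic to a finite dimensional subspace $G$ of $Y_\mathscr V$. A routine coordinate argument now applies: if $[(y^k_j)_j]$ ($1\le k\le\dim G$) is a basis of $G$, then, using $\|[(y_j)]\|=\lim_\mathscr V\|y_j\|$ together with the compactness of the unit sphere of $G$, for $\mathscr V$-almost every index $j$ the map $[(y^k_j)_j]\mapsto y^k_j$ is a $(1+\e)$-isomorphism of $G$ onto a subspace of $Y$. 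Hence every finite dimensional subspace of $X$ is, for each $\e>0$, $(\lambda+\e)$-isomorphic to a subspace of $Y$; by symmetry the same holds with $X$ and $Y$ interchanged, so $X$ and $Y$ are crudely finitely representable in each other.

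\emph{Step 2 (local to global).} By the previous step $X$ and $Y$ are crudely finitely equivalent. Now I would invoke the fact --- the isomorphic counterpart, due to Henson and Moore \cite{hensonmoore83}, of the classical statement that isometrically finitely equivalent Banach spaces have isometric ultrapowers with respect to a common ultrafilter (a Banach-space form of the Keisler--Shelah theorem; see \cite{heinrich}) --- that if $X$ and $Y$ are crudely finitely representable in each other then there is a countably incomplete ultrafilter $\mathscr W$ with $X_\mathscr W\sim Y_\mathscr W$. This gives $X\stackrel{u}\sim Y$ and completes the proof.

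\emph{Where the work is.} The content of the argument is entirely in the cited result of Step 2; the rest is unwinding definitions. That result is proved by choosing $\mathscr W$ over a directed index set of finite ``configurations'' (equivalently, an ultrafilter with enough saturation) and amalgamating, by a back-and-forth procedure, the finite dimensional isomorphisms furnished by Step 1 into a single isomorphism $X_\mathscr W\to Y_\mathscr W$. The delicate point --- not present in the isometric case --- is to keep the norms of the partial maps and of their inverses uniformly bounded throughout the construction, so that the resulting map is genuinely an isomorphism. A second approach, bypassing finite representability, would be to produce by iteration of ultrapowers a single ultrafilter $\mathscr W$ ``absorbing'' both $\mathscr U$ and $\mathscr V$, in the sense that $(X_\mathscr U)_\mathscr W\approx X_\mathscr W$ and $(Y_\mathscr V)_\mathscr W\approx Y_\mathscr W$, and then transport the given isomorphism through the ultrapower functor; but the existence of such an absorbing $\mathscr W$ is a set-theoretic issue of the same order of difficulty.
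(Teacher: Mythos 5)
Your Step 1 is fine (finite-dimensional subspaces of an ultrapower are $(1+\e)$-isomorphic to subspaces of the base space, so an isomorphism $X_\U\sim Y_\V$ does give crude mutual finite representability), but the proof collapses at Step 2: the ``fact'' you invoke there is false. Crude mutual finite representability is far weaker than having isomorphic ultrapowers over a common ultrafilter, and this very paper provides the counterexample. The spaces $c_0$ and the Gurari\u\i\ space $\mathcal G$ are both Lindenstrauss spaces, hence each is (even almost isometrically) finitely representable in the other; yet every ultrapower of $c_0$ is a $C$-space, while Proposition~\ref{cvsg} shows that no ultrapower of $\mathcal G$ is isomorphic to a complemented subspace of an $M$-space. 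So $(c_0)_\mathscr W\not\sim \mathcal G_\mathscr W$ for every $\mathscr W$, i.e.\ these two crudely finitely equivalent spaces have different ultratypes --- indeed the existence of more than one ultratype among $\mathscr L_\infty$-spaces (all of which are crudely finitely equivalent to $c_0$) is one of the main points of Section~\ref{u}, and your Step 2 would flatly contradict it. The hypothesis that does yield a common ultrapower up to isometry is elementary equivalence (approximate satisfaction of the same positive bounded sentences), which is the genuine Banach-space Keisler--Shelah theorem; you have conflated it with finite equivalence.

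The correct route, which is the one the paper takes, does not pass through finite representability at all. One uses two facts: the iteration formula $(Z_\U)_\V= Z_{\U\times\V}$ for the product ultrafilter, and Stern's Keisler--Shelah-type theorem applied to a \emph{single} space with two ultrafilters: for every $Z$ and every $\U,\V$ there is $\mathscr W$ with $(Z_\U)_\mathscr W\approx (Z_\V)_\mathscr W$ (this holds because $Z_\U$ and $Z_\V$ are both elementarily equivalent to $Z$). Given $X_\U\sim Y_\V$, choose $\mathscr W$ with $(Y_\U)_\mathscr W\approx (Y_\V)_\mathscr W$; then
$$X_{\U\times\mathscr W}=(X_\U)_\mathscr W\sim (Y_\V)_\mathscr W\approx (Y_\U)_\mathscr W=Y_{\U\times\mathscr W},$$
so the single ultrafilter $\U\times\mathscr W$ witnesses ultra-isomorphism. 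Your closing remark about an ``absorbing'' ultrafilter is in the right spirit, but as stated it asks for more than is needed ($(X_\U)_\mathscr W\approx X_\mathscr W$ simultaneously for both spaces) and you do not carry it out; the above application of Stern's theorem to $Y$ alone is what actually closes the argument.
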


\begin{proof} The iteration of
ultrapowers produces new ultrapowers. Indeed, suppose that $\U,\V$ are ultrafilters on $I$ and $J$ respectively. Let $\mathscr W$ denote the family of all
subsets $W$ of $K=I\times J$ for which the set $\{j\in J: \{i\in I: (i,j)\in W\}\in \U\}$ belongs to $\V$. Then $\mathscr W$ is an ultrafilter, often denoted by $\U\times \V$, and moreover, one has $Z_\mathscr W=(Z_\U)_\V$ for all Banach spaces $Z$.
On the other hand, the Banach space version of the Keisler-Shelah isomorphism
theorem due to Stern \cite[Theorem 2.1]{stern}
establishes that given a Banach space $X$ and two ultrafilters $\mathscr U, \mathscr V$  then there is an ultrafilter $\mathscr W$ on some index set $K$ such that $(X_\U)_\mathscr W\approx (X_\V)_\mathscr W$.

Now, if $X_{\mathscr U} \sim Y_{\mathscr
V}$, taking
an ultrafilter $\mathscr W$ such that $(Y_{\mathscr U})_\mathscr W\approx (Y_\mathscr V)_\mathscr W$ we have
$$X_{\mathscr U \times \mathscr W} =  (X_{\mathscr U})_{\mathscr W} \sim (Y_{\mathscr V})_{\mathscr
W} \approx (Y_{\mathscr U})_\mathscr W =  Y_{\mathscr U \times
\mathscr W}.$$
\\[-30pt]
\end{proof}

Recall that a Banach space is an $\mathscr L_\infty$-space if and only if some (or every) ultrapower is.
The question of the classification of
$\mathscr L_\infty$-spaces appears posed in \cite[p. 106]{hensonmoore83} and \cite[p. 315]{HH} and was considered in \cite{henson-76}

\begin{problem}
How
many ultra-types of $\mathscr L_\infty$-spaces are there?
\end{problem}

We will support Henson-Moore assertion \cite[p. 106]{hensonmoore83}
that there are at least two different ultra-types:
one is that of $C$-spaces and the other is that of Gurari\u\i\
space. The following result was proved by Henson long time ago \cite[Corollary 3.11]{henson-76} for nonstandard hulls of $C$-spaces (instead of ultrapowers of $M$-spaces).
We give a proof based on ideas of \cite{stern} that can be easily modified to the effect of proving next Theorem~\ref{uplus}. To simplify the exposition let
us write $X\vartriangleleft Y$ to mean that $X$ is isomorphic to a
complemented subspace of $Y$.

\begin{prop}\label{Mc0}
All infinite dimensional $M$-spaces have the same ultra-type.
\end{prop}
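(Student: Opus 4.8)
The plan is to show that every infinite-dimensional $M$-space $X$ has an ultrapower isomorphic to an ultrapower of $c_0$, which suffices since ``having the same ultratype'' is an equivalence relation (and $c_0$ is itself an $M$-space). Fix a free ultrafilter $\U$ on $\N$. The first step is to recall, from \cite[Lemma 3.2]{c:M} and the surrounding discussion, that an ultrapower $X_\U$ of an $M$-space is again an (abstract) $M$-space, hence isometric to a concrete $M$-space, hence a $C_0$-space or more precisely contractively complemented in a $C$-space; in fact since $M$-spaces are $\mathscr L_{\infty,1+}$, Proposition~\ref{ultra-Linf} tells us $X_\U$ is Grothendieck, and Lemma~\ref{S} describes its separable subspaces. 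The key structural input I would use is that $X$, being an $M$-space, is itself an $\mathscr L_{\infty,1+}$-space, so by the Lindenstrauss--Rosenthal type representation one can write $X$ (or rather arrange its finite-dimensional structure) in terms of finite-dimensional $\ell_\infty^n$'s sitting nicely inside one another.

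The heart of the argument, following Stern's ideas as the authors announce, is a back-and-forth / ``sandwich'' construction at the level of ultrapowers. Concretely: using that $X$ is $\mathscr L_{\infty,1+}$, build an increasing sequence of finite-dimensional subspaces $E_n \subset X$ with $\bigcup_n E_n$ dense in a suitable separable piece and with $d(E_n,\ell_\infty^{k_n})$ close to $1$; symmetrically, $c_0$ itself is the closure of an increasing union of $\ell_\infty^{k_n}$'s (the initial-segment coordinate subspaces). One then wants to interleave: produce almost-isometric embeddings $\ell_\infty^{k_n} \hookrightarrow E_{m(n)}$ and $E_n \hookrightarrow \ell_\infty^{k_{m'(n)}} = $ (coordinate subspace of $c_0$) that are approximately compatible with the inclusions, and pass to the ultrapower. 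Because $\U$ is countably incomplete, the errors $\to 0$ along $\U$ and the two families of maps assemble into an isomorphism (indeed one can hope for an isometry in the $\mathscr L_{\infty,1+}$, i.e. $M$-space, setting) between $X_\U$ and $(c_0)_\U$. The iteration-of-ultrapowers remark in the preceding Lemma lets us absorb the index sets, so that no matter over which $\U$ each side is taken, a common further ultrapower identifies them.

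The main obstacle I anticipate is making the back-and-forth genuinely \emph{lattice/$M$-space coherent}: one must choose the finite-dimensional approximants $E_n$ and the connecting maps so that they respect (up to small error) not just the norm but the $M$-space structure — i.e. the maps should be approximately lattice homomorphisms between $\ell_\infty^{k_n}$'s — because otherwise the limit isomorphism need not land inside the right class and, more importantly, the compatibility conditions needed to glue the two directions together fail. This is exactly where the hypothesis that the spaces are $M$-spaces (not merely $\mathscr L_\infty$) is used, and it is why the cruder statement fails for general $\mathscr L_\infty$-spaces. A secondary technical point is bookkeeping the countable incompleteness: one fixes a decreasing sequence $(I_n)\in\U$ with empty intersection, defines a dimension function $k(i)=\sup\{n : i\in I_n\}$ as in the proof of Lemma~\ref{S}, and checks that the resulting ultraproduct $[\,\cdot\,]_\U$ of the finite-dimensional pieces is simultaneously an ultrapower of $c_0$ (via $c_0^{k(i)}$ as in the proof of Proposition~\ref{coout}) and contains/equals the relevant separable skeleton of $X_\U$. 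Assembling these — the structural recall, the coherent back-and-forth, and the countably-incomplete gluing — gives $X \stackrel{u}{\approx} c_0$, and in particular $X\stackrel{u}\sim c_0$, which is the assertion.
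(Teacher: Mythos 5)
There is a genuine gap, and it sits exactly at the step you flag as ``the main obstacle'' and then do not carry out. Your plan is to fix one free ultrafilter $\mathscr U$ on $\N$ and to build almost-isometric embeddings between the finite-dimensional $\ell_\infty^{k}$-pieces of $X$ and the coordinate subspaces of $c_0$, approximately compatible with the inclusions, so that in the limit $X_{\mathscr U}$ and $(c_0)_{\mathscr U}$ become isomorphic (``one can hope for an isometry''). This cannot work as stated. If such an almost-commuting back-and-forth existed it would produce an \emph{isometry} of ultrapowers, and that is provably impossible already for $X=C[0,1]$: an ultrapower of a unital $C$-space is again a $C$-space (it has an order unit), while no ultrapower of $c_0$ has an order unit (given any candidate $e=[(e_i)]$, each $e_i\in c_0$ is small on some coordinate, so one finds a norm-one positive element not dominated by $e$); Heinrich's ultra-root theorems quoted in Section 5.3 make the same point. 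The underlying reason is that the $\mathscr L_{\infty,1+}$ (local) structure you invoke does not determine an ultrapower up to the relevant equivalence, so no purely finite-dimensional interleaving, however ``lattice coherent'', can close the argument. The merely-isomorphic identification needs global ingredients, and your sketch contains no mechanism that produces an isomorphism which is not an isometry. The paper's route is entirely different: Stern's theorem (every Banach lattice $E$ admits, for each separable $F\subset E$, a separable sublattice $L\supset F$ and an ultrafilter with $L$ and $E$ having isometric ultrapowers) reduces to separable $M$-spaces; then Benyamini's theorem (separable $M$-spaces are isomorphic to $C(K)$-spaces), Pe\l czy\'nski's theorem giving complemented copies, Sobczyk for $c_0\vartriangleleft X$, the Keisler--Shelah/Stern ultrapower theorem to equalize iterated ultrapowers, and finally the Pe\l czy\'nski decomposition method yield $X_{\mathscr T}\sim (c_0)_{\mathscr T}$ for a suitably \emph{iterated} ultrafilter $\mathscr T$ --- not for a prechosen $\mathscr U$ on $\N$.

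A second, independent gap is the nonseparable case. With your fixed ultrafilter on $\N$, the claim $X_{\mathscr U}\sim (c_0)_{\mathscr U}$ is false for any $M$-space $X$ of density character larger than the continuum, since $X$ embeds isometrically into $X_{\mathscr U}$ while $\dens\left((c_0)_{\mathscr U}\right)\leq\mathfrak c$. The reduction to separable spaces is therefore not a side remark but an essential step, and it is exactly what Stern's separable-sublattice theorem accomplishes in the paper; your proposal never performs it, referring only vaguely to ``a suitable separable piece'' without explaining how the whole of $X$ is recovered at the ultrapower level.
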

\begin{proof}
The key of the reasoning is the
following nice result of Stern \cite[Theorem 2.2]{stern}: \emph{Let  $F$ be  a separable subspace of the Banach space $E$. There exists
a separable subspace $L$ of $E$ containing $F$ and an ultrafilter $\U$ such that $L_\U\approx E_\U$. If $E$ is a Banach lattice then $L$ can be
chosen to be a sublattice of $E$.} This implies that
every $M$-space $X$ has an ultrapower isometric to an ultrapower of some separable $M$-space $Y$. It is therefore enough to prove the
assertion for separable $M$-spaces and we will prove that if $X$ is an infinite dimensional separable $M$-space, then $X\stackrel{u}\sim c_0$.

We first observe that $c_0 \vartriangleleft X$: all Lindenstrauss spaces contain copies of $c_0$ and all copies of $c_0$ are complemented in separable spaces. On the other hand, by the very definition of a separable $\mathscr L_\infty$-space we see that $X$ embeds into an ultraproduct $(\ell_\infty^n)_\U$, where $\U$ is any free ultrafilter on the integers. Therefore $X$ embeds as a subspace of $(c_0)_\U$. By Stern's result quoted above there is a separable sublattice $L$ of $(c_0)_\U$ which contains a copy of $X$ and an ultrafilter $\V$ such that $L_\V\approx (c_0)_{\U\times\V}$. But $X$ and $L$ are $M$-spaces and separable $M$-spaces are isomorphic to $C$-spaces (Benyamini \cite{beny}). This implies that:
\begin{itemize}
\item $X$ is isomorphic to its square (Bessaga-Pe\l czy\'nski \cite[Theorem 3]{besapelc});
\item $L$ contains a complemented copy of $X$ (Pe\l czy\'nski \cite[Theorem 1]{pelc}) .
\end{itemize}
We have arrived to the following situation:
$$
X_\V  \vartriangleleft L_\V \approx (c_0)_{\U\times\V}  \vartriangleleft X_{\U\times\V}.
$$
Now we can apply the ultrapower theorem to get an ultrafilter $\mathscr W$ such that $(X_\V)_\mathscr W\approx  (X_{\U\times\V})_\mathscr W$. Letting $\mathscr T= \mathscr{(U\times V)\times W}$ we have
$$
X_\mathscr T\approx (X_\mathscr{V})_\mathscr W \vartriangleleft  ((c_0)_{\U\times\V})_\mathscr W = (c_0)_\mathscr T.
$$
 Recalling that $c_0  \vartriangleleft X$ one also has $(c_0)_\mathscr T  \vartriangleleft X_\mathscr T$.
Since both spaces $X$ and $c_0$ are isomorphic to their squares the same is true for their ultrapowers and Pe\l czy\'nski's
decomposition method (see \cite{pelcde}) yields $X_\mathscr T\approx  (c_0)_\mathscr T$.
\end{proof}

\begin{teor}\label{uplus} Let $X$ be either an $M$-space
or a complemented subspace of an $M$-space that is
moreover isomorphic to its square.
Then $X\stackrel{u}\sim\ell_\infty$.
\end{teor}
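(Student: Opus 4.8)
The plan is to treat the two alternatives of the hypothesis separately; we assume throughout that $X$ is infinite-dimensional, since the statement fails otherwise. If $X$ is an $M$-space there is nothing to do: $X$ and $\ell_\infty$ are then both infinite-dimensional $M$-spaces, so by Proposition~\ref{Mc0} they have the same ultra-type, i.e. $X\stackrel{u}\sim\ell_\infty$. The content of the theorem is therefore the remaining case, in which $X$ is isomorphic to a complemented subspace of an $M$-space $M$ and $X$ is isomorphic to its square; here I would follow the argument of Proposition~\ref{Mc0}, with $\ell_\infty$ in the role that $c_0$ plays there.

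The first step is a reduction. Since $M$ is an infinite-dimensional $M$-space, Proposition~\ref{Mc0} gives an ultrafilter with respect to which $M$ has an ultrapower isomorphic to the corresponding ultrapower of $\ell_\infty$; the latter, being an ultrapower of $\ell_\infty=C(\beta\mathbb N)$, is isometric to a $C(K)$-space, and since complementation passes to ultrapowers we conclude that some ultrapower $X_\mathscr{U}$ of $X$ is isomorphic to a complemented subspace of a $C(K)$-space. As $X_\mathscr{U}$ has the same ultra-type as $X$ and inherits both hypotheses of the theorem, we may replace $X$ by $X_\mathscr{U}$, so that henceforth $X$ is a complemented subspace of a $C(K)$-space $C$. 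In particular $X$ contains a copy of $c_0$: being a complemented subspace of $C$ it has the Dunford--Pettis property, hence (being infinite-dimensional) is non-reflexive, and it also has Pe\l czy\'nski's property~(V), and a non-reflexive space with property~(V) contains $c_0$.

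Now I would assemble the two inclusions that drive the decomposition method. Since $C$ is an infinite-dimensional $M$-space, Proposition~\ref{Mc0} yields an ultrafilter $\mathscr{U}$ with $C_\mathscr{U}\sim(\ell_\infty)_\mathscr{U}$, hence $X_\mathscr{U}\vartriangleleft(\ell_\infty)_\mathscr{U}$. On the other hand, applying Proposition~\ref{Mc0} to $c_0$ and $\ell_\infty$ gives an ultrafilter $\mathscr{S}$ with $(c_0)_\mathscr{S}\sim(\ell_\infty)_\mathscr{S}$; composing the diagonal embedding $\ell_\infty\hookrightarrow(\ell_\infty)_\mathscr{S}$ with this isomorphism and with the embedding $(c_0)_\mathscr{S}\hookrightarrow X_\mathscr{S}$ induced by $c_0\hookrightarrow X$ realizes $\ell_\infty$ as a subspace of $X_\mathscr{S}$, which is complemented because $\ell_\infty$ is injective; thus $\ell_\infty\vartriangleleft X_\mathscr{S}$ and, on taking $\mathscr{U}$-ultrapowers, $(\ell_\infty)_\mathscr{U}\vartriangleleft X_{\mathscr{S}\times\mathscr{U}}$. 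Since $X_\mathscr{U}$ and $X_{\mathscr{S}\times\mathscr{U}}$ are both ultrapowers of $X$, Stern's ultrapower theorem \cite[Theorem~2.1]{stern} provides an ultrafilter $\mathscr{W}$ with $(X_\mathscr{U})_\mathscr{W}\approx(X_{\mathscr{S}\times\mathscr{U}})_\mathscr{W}$. Put $\mathscr{T}=(\mathscr{S}\times\mathscr{U})\times\mathscr{W}$ and $\mathscr{B}=\mathscr{U}\times\mathscr{W}$; using $Z_{\mathscr{P}\times\mathscr{Q}}=(Z_\mathscr{P})_\mathscr{Q}$ and taking $\mathscr{W}$-ultrapowers of the two inclusions above, we obtain $X_\mathscr{T}\approx(X_\mathscr{U})_\mathscr{W}\vartriangleleft(\ell_\infty)_\mathscr{B}$ and $(\ell_\infty)_\mathscr{B}\vartriangleleft X_\mathscr{T}$. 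Because $X$ and $\ell_\infty$ are isomorphic to their squares, so are $X_\mathscr{T}$ and $(\ell_\infty)_\mathscr{B}$; hence Pe\l czy\'nski's decomposition method \cite{pelcde}---if $A\vartriangleleft B$, $B\vartriangleleft A$, $A\sim A\oplus A$ and $B\sim B\oplus B$, then $A\sim B$---yields $X_\mathscr{T}\sim(\ell_\infty)_\mathscr{B}$. Since ultra-isomorphism is witnessed by ultrapowers over possibly distinct ultrafilters, this means $X\stackrel{u}\sim\ell_\infty$.

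The main obstacle is this last piece of ultrafilter bookkeeping. A copy of $\ell_\infty$ only appears inside an ultrapower of $X$ after passing through the ultrafilter $\mathscr{S}$ furnished by Proposition~\ref{Mc0}, so it never sits over the same index set as the $\ell_\infty$-factor on the opposite side; the remedy---as in Proposition~\ref{Mc0}---is to realign the two ultrapowers of $X$ with Stern's ultrapower theorem before invoking the decomposition method. The subsidiary point to verify is that an infinite-dimensional complemented subspace of a $C(K)$-space must contain $c_0$, and this (via non-reflexivity and property~(V)) is precisely where infinite-dimensionality of $X$ is used.
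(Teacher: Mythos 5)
Your proof is correct, and its skeleton is the same as the paper's: exhibit an ultrapower of $X$ and an ultrapower of $\ell_\infty$ each as a complemented subspace of the other, realign the ultrafilters with Stern's ultrapower theorem, and conclude with Pe\l czy\'nski's decomposition method (both spaces being isomorphic to their squares). Where you genuinely deviate is in how a complemented copy of $\ell_\infty$ is planted inside an ultrapower of $X$. The paper notes that $X$, being an infinite-dimensional complemented subspace of an $M$-space, is itself an infinite-dimensional $\mathscr L_\infty$-space, so the very same ultrapower $X_{\mathscr U}$ that satisfies $X_{\mathscr U}\vartriangleleft(\ell_\infty)_{\mathscr U}$ already contains $\ell_\infty$ (via the local structure behind Lemma~\ref{S}), complemented by injectivity; hence both containments live over one ultrafilter and Stern's theorem is applied only to $\ell_\infty$, giving $(\ell_\infty)_{\mathscr V}\approx(\ell_\infty)_{\mathscr U\times\mathscr V}$. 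You avoid invoking that complemented subspaces of $\mathscr L_\infty$-spaces are again $\mathscr L_\infty$: instead you first reduce (via Proposition~\ref{Mc0} and passage to an ultrapower) to $X$ complemented in a $C$-space, extract $c_0\subset X$ from the Dunford--Pettis property and property (V), use Proposition~\ref{Mc0} for $c_0$ to get $\ell_\infty\vartriangleleft X_{\mathscr S}$ over a different ultrafilter, and must then apply Stern's theorem to $X$ itself to reconcile $\mathscr U$ with $\mathscr S\times\mathscr U$. Both routes are sound: the paper's is shorter and self-contained within its $\mathscr L_\infty$ framework, while yours buys independence from the Lindenstrauss--Rosenthal-type fact about complemented subspaces at the price of classical $C(K)$ machinery and some extra ultrafilter bookkeeping.
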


\begin{proof} If $X$ is an $M$-space the statement is contained in the preceding Proposition.
Suppose $X$ is isomorphic to its square and complemented in an $M$-space $E$. As $E$ has the same ultra-type as $\ell_\infty$ there is an ultrafilter $\U$ such that $E_\U\sim (\ell_\infty)_\U$ and so $X_\U  \vartriangleleft (\ell_\infty)_\U$. But $X$ is an infinite dimensional $\mathscr L_\infty$-space and so $\ell_\infty$ embeds as a subspace of $X_\U$. Hence
$
\ell_\infty   \vartriangleleft X_\U  \vartriangleleft (\ell_\infty)_\U
$.
Let $\V$ be an ultrafilter such that $(\ell_\infty)_\mathscr V\approx (\ell_\infty)_{\U\times\V}$. One has
$$
(\ell_\infty)_{\U\times \V}\approx (\ell_\infty)_\mathscr V \vartriangleleft X_{\U\times \V}  \vartriangleleft (\ell_\infty)_{\U\times \V}
$$
and since $X$ and $\ell_\infty$ and their ultrapowers are all isomorphic to their squares we can apply Pe\l czy\'nski's
decomposition method again and we are done.
\end{proof}

Regarding the statements quoted in the Introduction, this provides a proof for (c) and amends (a) and (b): both are true (at least) under the additional   hypothesis that $E$ is isomorphic to its square. We show now that Gurari\u\i\ space has a different
ultra-type. Let us recall a few basic facts about
this space. A Banach space
$U$ is said to be of almost-universal disposition if, given isometric embeddings $u: A\to U$ and
$\imath:A\to B$, where $A$ and $B$ are finite dimensional, and $\e>0$,  there is an $(1+\e)$-isometric
embedding $u': B\to U$ such that $u=u'\imath$.
Gurari\u{\i} shows that there exists a separable Banach space of
almost-universal disposition \cite[Theorem 2]{Gurariiold}. This
space was shown by Lusky \cite{luskygura} to be unique, up to
isometries; see \cite{kubis-solecki} for an elementary proof. We will thus call it the Gurari\u{\i} space and denote
it by $\mathcal G$. Henson and Moore \cite[Theorem
6.5]{hensonmoore} show that a Banach space is of almost universal
disposition if and only if some (or every) ultrapower is of almost universal disposition (see \cite[Proposition 5.7]{accgm2} for an improvement of this result).

Gurari\u\i\ space is a Lindenstrauss space and, moreover, every separable Lindenstrauss space is isometric to a
complemented subspace of $\mathcal G$ \cite{wojt} whose complement is
isomorphic to $\mathcal G$ itself \cite{lusk} (see also \cite{p-w}). This implies that $\mathcal G$ is isomorphic (not isometric) to its square and also that $\mathcal G$ is complemented in no $C$-space (Benyamini and Lindenstrauss \cite[Corollary 2]{bl}). With all these prolegomena one has.

\begin{prop}\label{cvsg}
No ultrapower of Gurari\u\i\ space is isomorphic to a complemented subspace of an $M$-space.
\end{prop}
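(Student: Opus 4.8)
The plan is to combine Theorem~\ref{uplus} with the fact that ultrapowers of the Gurari\u{\i} space are again of almost-universal disposition, reaching a contradiction with the Benyamini--Lindenstrauss obstruction \cite{bl}. So suppose, towards a contradiction, that some ultrapower $\mathcal G_\U$ is isomorphic to a complemented subspace of an $M$-space. As recorded above, $\mathcal G$ is isomorphic to its square, and applying the ultrapower functor to an isomorphism $\mathcal G\to\mathcal G\oplus\mathcal G$ (ultrapowers commute with finite direct sums) shows $\mathcal G_\U\sim\mathcal G_\U\oplus\mathcal G_\U$ as well. Hence $\mathcal G_\U$ is at the same time isomorphic to its square and to a complemented subspace of an $M$-space, so Theorem~\ref{uplus} applies and yields $\mathcal G_\U\stackrel{u}\sim\ell_\infty$; that is, there is an ultrafilter $\V$ such that $(\mathcal G_\U)_\V\sim(\ell_\infty)_\V$.

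Next I would unravel both sides of this isomorphism. On the left, the iteration of ultrapowers gives $(\mathcal G_\U)_\V=\mathcal G_{\U\times\V}$, which is again an ultrapower of $\mathcal G$ and therefore still of almost-universal disposition, by the Henson--Moore stability theorem \cite{hensonmoore}. On the right, $(\ell_\infty)_\V$ is a $C$-space: indeed $\ell_\infty=C(\beta\N)$, and the class of $C$-spaces is stable under ultraproducts (Section~\ref{pre}). Thus we would have manufactured a space of almost-universal disposition that is isomorphic to a $C$-space, hence isomorphic to a complemented subspace of a $C$-space, and this is precisely what \cite{bl} forbids.

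The hard part, and really the only non-formal step, is this last one. The Benyamini--Lindenstrauss result is customarily quoted for the separable space $\mathcal G$, whereas $\mathcal G_{\U\times\V}$ is non-separable, and one cannot cheaply reduce to the separable case: although $c_0$ is complemented in $\mathcal G$, it is complemented in no ultrapower by Proposition~\ref{coout}, so $\mathcal G$ itself fails to be complemented in $\mathcal G_{\U\times\V}$, and there is no obvious separable complemented subspace of $\mathcal G_{\U\times\V}$ to which to apply \cite{bl}. What makes the argument go through is that the obstruction in \cite{bl} to being complemented in a $C(K)$-space is local --- it only exploits the almost-universal disposition of the space through finite-dimensional data --- so it applies verbatim to any space of almost-universal disposition, irrespective of its density character. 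I would therefore either invoke \cite{bl} in this stronger density-free form or, to keep the paper self-contained, isolate it as a short preliminary remark; granting it, the proof above is complete.
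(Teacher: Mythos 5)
The first half of your argument is correct and coincides with the paper's opening move: from $\mathcal G\sim\mathcal G\oplus\mathcal G$, Theorem~\ref{uplus} and the iteration of ultrapowers you obtain an ultrapower $\mathcal G_{\U\times\V}$ which is of almost-universal disposition (Henson--Moore \cite{hensonmoore}) and isomorphic to a $C$-space, since $(\ell_\infty)_\V$ is one. The gap is exactly where you locate it yourself: to reach a contradiction you need that no space of almost-universal disposition, of arbitrary density character, can be isomorphic to (a complemented subspace of) a $C$-space, whereas \cite{bl} is a statement about the separable space $\mathcal G$. Your justification --- that the Benyamini--Lindenstrauss obstruction ``only exploits finite-dimensional data'' and hence ``applies verbatim'' in any density character --- is asserted, not proved. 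It is not an innocuous assertion: being of almost-universal disposition is indeed a local property, but being complemented in a $C(K)$-space is not, and the result cited is proved in \cite{bl} for a specific separable $\ell_1$-predual; passing from that to a density-free obstruction is precisely the nontrivial content that is missing. So, as written, the decisive step of your proof is absent rather than merely deferred to ``a short preliminary remark''.

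The paper's own proof is devoted to bridging exactly this gap, and it does so without strengthening \cite{bl}: given the isomorphism $u:\mathcal G_\U\to C(K)$ furnished by Theorem~\ref{uplus}, it alternates Stern's separable-subspace theorem (quoted in the proof of Proposition~\ref{Mc0}), which produces separable subspaces of $\mathcal G_\U$ that are again of almost-universal disposition, with separable unital subalgebras of $C(K)$; in the limit $u$ restricts to an isomorphism of a separable $G\approx\mathcal G$ onto a separable unital closed subalgebra $A$ of $C(K)$, i.e.\ onto a $C$-space, and only then is the separable result of \cite{bl} invoked. Note that a cheap separable reduction is unavailable, as you yourself observe: $\mathcal G$ is not complemented in $\mathcal G_{\U\times\V}$, which is a Grothendieck space, so one cannot simply restrict a projection. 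The density-free statement you wish to use is in fact true, but it requires its own argument (the paper points to \cite[Theorem 6.1]{accgm2} for a direct proof); to complete your route you would have to supply such an argument, or else carry out a separable reduction of the kind just described.
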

\begin{proof} Assume that  some ultrapower of $\mathcal G$  is isomorphic to a complemented subspace of an $M$-space.
As $\mathcal G$ is isomorphic to its square Theorem~\ref{uplus} implies that there is a compact space $K$, an ultrafilter $\U$ and a linear isomorphism $u:\mathcal G_\U\to C(K)$. Let $G_1$ be a linear subspace of $\mathcal G_\U$ isometric to $\mathcal G$, for instance that lying on the diagonal. Let $A_1$ be the (separable) unital subalgebra that $u(G_1)$ generates in $C(K)$. By Stern result quoted in the proof of Proposition~\ref{Mc0}, there is a separable subspace $G_2$ containing $u^{-1}(A_1)$ having an ultrapower isometric to an ultrapower of $\mathcal G_\U$. This implies that $G_2$ is a space of almost universal disposition. Continuing in this way we get two sequences $(G_n)$ and $(A_n)$ such that:
\begin{itemize}
\item Every $G_n$ is a separable space of almost universal disposition.
\item Every $A_n$ is a separable unital subalgebra of $C(K)$.
\item For every $n\in\mathbb N$ one has $u(G_n)\subset A_n\subset u(G_{n+1})$.
\end{itemize}
Now, letting $G=\overline{\bigcup_{n}G_n}$ and $A=\overline{\bigcup_{n}A_n}$ we see that we $G$ is of almost universal disposition, hence $G\approx \mathcal G$, $A$ is a separable and unital closed subalgebra of $C(K)$, hence a $C$-space, and $u$ is a linear isomorphism from $G$ onto $A$, which contradicts the above mentioned result of Benyamini and
Lindenstrauss.
\end{proof}

This amends the statement quoted as (d) in the Introduction. A more direct proof for this fact
appears in \cite[Theorem 6.1]{accgm2}. It would be however a mistake to think that the
reason for such behaviour is that $\mathcal G$ is not complemented
in any $C$-space, as the following examples show.

\begin{example} (a) There is a (nonseparable) Lindenstrauss space which is complemented in no $C$-space but has an ultrapower isomorphic to a $C$-space.

(b) Under {\sf CH}, there is a separable space that is not even a
quotient of a Lindenstrauss space and has an ultrapower isomorphic to a $C$-space.

\end{example}

\begin{proof} (a) Benyamini constructed in \cite{b} a nonseparable $M$-space which is complemented in no $C$-space. That space has an ultrapower isomorphic to a $C$-space, by Theorem~\ref{uplus}.

(b) It is not hard to check that if $0\to Y \to X\to Z \to 0$ is an
exact sequence and $\mathscr U$ an ultrafilter then  $0\to Y_\U\to
X_\U\to Z_\U\to 0$ is also exact (see \cite[Lemma 2.2.g]{castgonz}).

On the other hand, it
has been shown in \cite[Corollary 2.4]{ccky} that there is an exact sequence
$0\to C(\Delta)\to \Omega\to  C(\Delta)\to 0$ in which $\Omega$ is not even
isomorphic to a quotient of a Lindenstrauss space. Here, $\Delta=2^\N$ is the Cantor set. Let $\U$ be a free ultrafilter on the integers and let us consider the ultrapower sequence
\begin{equation}\label{ul}
\begin{CD} 0@>>>C(\Delta)_\U@>>>\Omega_\U@>>> C(\Delta)_\U@>>> 0.\end{CD}
\end{equation}
We will see that this sequence does split if we assume {\sf CH}. Indeed, Bankston oberved in \cite[Proposition 2.4.1]{bankston}
that, under {\sf CH}, the ultracoproduct $\Delta^\U$ is homeomorphic to $\N^*= \beta \N \setminus \N$, the growth of the integers in its Stone-\v Cech compactification. Thus, under {\sf CH}, the sequence (\ref{ul}) has the form
$0\to C(\N^*)\to \Omega_\U\to  C(\N^*)\to 0$. But we have proved in cite \cite[Proposition 5.6]{accgm1} that every exact sequence of the form $0\to C(\N^*)\to X \to  C(\N^*)\to 0$ splits and so (\ref{ul}) does. Therefore, $\Omega_\U\sim C(\N^*)\times C(\N^*)\approx C(\N^*)$ is a $C$-space.
\end{proof}

\section{Further remarks and open problems}\label{open}

\subsection{More ultratypes, please.}
We have obtained so far only two
different ultra-types of $\mathscr
L_\infty$-spaces: that of $C$-spaces and that of Gurari\u\i\ space.
It would be interesting to add some new classes here. Reasonable
candidates could be the recently constructed hereditarily
indecomposable $\mathscr L_\infty$-spaces \cite{nastiest,tarb}, the preduals of $\ell_1$ in \cite{bl, gasparis}; or
some Bourgain-Pisier spaces \cite{bourpisi}. Since both $\mathcal
G$ and $C$-spaces are Lindenstrauss spaces, one may wonder
whether every $\mathscr L_\infty$-space has an ultrapower
isomorphic to
a Lindenstrauss space.

The following problem was considered by Henson and Moore in \cite[Problem 21]{hensonmoore}. An affirmative answer would imply that the hypothesis of being isomorphic to its square
is superfluous in Theorem~\ref{uplus}.

\begin{problem}
Does every (infinite-dimensional, separable) Banach space $X$ have an ultrapower isomorphic to its square?
What if $X$ is an $\mathscr L_\infty$-space?
\end{problem}

It is perhaps worth noticing that Semadeni proved in \cite{semadeni} that the space of continuous functions on the first uncountable ordinal is not isomorphic to its square. Needless to say, this space has an ultrapower which is isomorphic to its own square.

\subsection{Ultra-splitting}
As we already mentioned, if   $0\to Y \to X\to Z \to 0$ is an
exact sequence and $\mathscr U$ an ultrafilter then  $0\to Y_\U\to
X_\U\to Z_\U\to 0$ is exact again. No
criterion however is known to determine when the ultrapower
sequence of a nontrivial exact sequence splits. Let us say that an
exact sequence ultra-splits if some of its
ultrapower sequences split. Applications of the previous results
yield:

\begin{prop}\label{spliting}  Let $0\to Y \to X\to Z \to 0$ be an exact sequence.
\begin{itemize}  \item If $X$ is a
$C$-space and either $Y$ or $Z$ is the Gurari\u{\i} space, the
sequence does not ultra-split.
\item Under {\sf CH}, if $Y$  an  $\mathscr L_\infty$ space and $Z$ is a separable Banach space complemented in a $C$-space, the
sequence ultra-splits.
\end{itemize}
\end{prop}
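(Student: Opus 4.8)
The first assertion is almost immediate from the machinery already in place. Suppose $0\to Y\to X\to Z\to 0$ is exact with $X$ a $C$-space and, say, $Y=\mathcal G$ (the case $Z=\mathcal G$ is dual in the obvious sense since $\mathcal G$ is then a complemented subspace of $X_\U$ once the ultrapower sequence splits, using that ultrapowers of $C$-spaces are $C$-spaces). If some ultrapower sequence $0\to \mathcal G_\U\to X_\U\to Z_\U\to 0$ split, then $\mathcal G_\U$ would be complemented in $X_\U$, which is a $C$-space (the class of $C$-spaces being stable under ultraproducts, as recalled in Section~\ref{lind}), hence $\mathcal G_\U$ would be isomorphic to a complemented subspace of an $M$-space, contradicting Proposition~\ref{cvsg}. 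For the case $Z=\mathcal G$: if the ultrapower sequence splits then $X_\U\sim Y_\U\times \mathcal G_\U$, so again $\mathcal G_\U$ is complemented in the $C$-space $X_\U$, and Proposition~\ref{cvsg} applies verbatim. So the first item needs no new idea; I would just spell out these two lines.

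The second assertion is where the work lies, and the plan is to reduce it to the splitting result for $C(\N^*)$ already invoked in the proof of Example~1(b), namely \cite[Proposition 5.6]{accgm1}. Assume {\sf CH}. Start from the exact sequence $0\to Y\to X\to Z\to 0$ with $Y$ an $\mathscr L_\infty$-space and $Z$ separable and complemented in a $C$-space. First pass to an ultrapower along a free ultrafilter $\U$ on $\N$ to get the exact sequence $0\to Y_\U\to X_\U\to Z_\U\to 0$ (exactness of ultrapower sequences, \cite[Lemma 2.2.g]{castgonz}). Now $Y_\U$ is again an $\mathscr L_\infty$-space (Section~\ref{lind}); I want to know it is, up to the next ultrapower, a $C(\N^*)$-space. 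This is the delicate point: an arbitrary $\mathscr L_\infty$-space need not be a $C$-space, so I cannot simply say $Y_\U\approx C(K)$. What I do know is that ultrapowers of $\mathscr L_\infty$-spaces are universally separably injective (as recalled after Proposition~\ref{coout}, from \cite[Theorem 4.10]{accgm1}), hence in particular separably injective, and {\sf CH} forces all separably injective spaces of density $\le\mathfrak c$ (and more) to be ``Pe\l czy\'nski-type'' — I would invoke the structure theory of separably injective spaces under {\sf CH} to the effect that an $\mathscr L_\infty$-valued sequence with separable quotient whose kernel is universally separably injective splits. Concretely: since $Z_\U$ is a $C$-space (stability under ultraproducts) and $Z$ is separable and complemented in a $C$-space, $Z_\U$ has the same ultratype as $\ell_\infty$ by Theorem~\ref{uplus}, so after a further ultrapower $Z_{\U\times\V}\sim(\ell_\infty)_{\U\times\V}$, which under {\sf CH} we may think of as $C(\N^*)$ by Bankston's homeomorphism $\Delta^\U\cong\N^*$; similarly $Y_{\U\times\V}$, being a universally separably injective $\mathscr L_\infty$-space of the appropriate density under {\sf CH}, is (isomorphic to) a $C(\N^*)$-space. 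Then the sequence $0\to Y_{\U\times\V}\to X_{\U\times\V}\to Z_{\U\times\V}\to 0$ is of the form $0\to C(\N^*)\to X'\to C(\N^*)\to 0$ and splits by \cite[Proposition 5.6]{accgm1}; since a further ultrapower of a split sequence splits, the original sequence ultra-splits.

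The main obstacle, and the step I would need to be most careful about, is justifying that the kernel $Y_\U$ (or some further ultrapower) is actually isomorphic to a $C(\N^*)$-space rather than merely a universally separably injective $\mathscr L_\infty$-space — in the stated generality $Y$ is only assumed to be an $\mathscr L_\infty$-space, not complemented in a $C$-space, so Theorem~\ref{uplus} does not apply to $Y$. The honest fix is probably to weaken the reduction: instead of identifying $Y_\U$ with $C(\N^*)$, use directly that the quotient $Z_{\U\times\V}$ is (isomorphic to) $C(\N^*)$ under {\sf CH} and that $Y_{\U\times\V}$ is separably injective, and then appeal to a splitting criterion for sequences $0\to Y\to X\to C(\N^*)\to 0$ with $Y$ separably injective — this is exactly the kind of statement proved in \cite{accgm1} en route to their Proposition 5.6, and I would cite it in that form. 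If no such ready-made statement exists, the fallback is to push everything through a common ultrafilter making both $Y$ and $Z$ into $C(\N^*)$-spaces by invoking universal separable injectivity plus the {\sf CH} rigidity of $C(\N^*)$ (its being $1$-separably-injective and Grothendieck, together with Parovi\v cenko's theorem), and then close with \cite[Proposition 5.6]{accgm1} as before. The first bullet is routine; essentially all the risk is concentrated in pinning down the isomorphic type of the kernel after ultrapowering.
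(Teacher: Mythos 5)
Your first item is fine and is exactly the paper's argument: splitting of the ultrapower sequence would exhibit $\mathcal G_\U$ as a complemented subspace of the $C$-space $X_\U$, contradicting Proposition~\ref{cvsg}; no further comment needed there.

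The second item is where your plan has genuine gaps. The paper's proof uses a single ultrapower and never identifies either $Y_\U$ or $Z_\U$ with $C(\N^*)$: since $Z$ is separable and complemented in a $C$-space, one may assume it is complemented in $C(\Delta)$; then $Z_\U$ is complemented in $C(\Delta)_\U$, which under {\sf CH} is isometric to $C(\N^*)\approx \ell_\infty/c_0$ by Bankston's theorem on $\Delta^\U$; and $Y_\U$ is universally separably injective by \cite[Theorem 4.10]{accgm1}, so every sequence $0\to Y_\U\to E\to \ell_\infty/c_0\to 0$ splits, and this splitting is inherited by the ultrapower sequence because its quotient $Z_\U$ is \emph{complemented} in $\ell_\infty/c_0$. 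Your route instead tries to turn the quotient itself into $C(\N^*)$, and the steps you use for that do not work as stated: Theorem~\ref{uplus} cannot be applied to $Z$, since it requires $Z$ to be isomorphic to its square, which is not among the hypotheses (nor is $Z_\U$ a $C$-space merely because $Z$ is complemented in one); and the identification of $(\ell_\infty)_{\U\times\V}$, or of $Z_{\U\times\V}$, with $C(\N^*)$ ``by Bankston'' is unjustified --- Bankston's {\sf CH} result concerns ultracoproducts of the Cantor set (metrizable compacta), not $\beta\N$; indeed the ultracoproduct of $\beta\N$ has isolated points, so it is not $\N^*$. Your ``honest fix'' does isolate the two correct ingredients (universal separable injectivity of the kernel after ultrapowering, plus a splitting criterion over $C(\N^*)$ from \cite{accgm1}), but it still hinges on the unproved isomorphism $Z_{\U\times\V}\sim C(\N^*)$ and leaves the key criterion as a hoped-for citation. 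The missing, and decisive, observation is that complementation suffices: once all extensions of $\ell_\infty/c_0$ by $Y_\U$ split, so do all extensions of any complemented subspace of $\ell_\infty/c_0$ by $Y_\U$, and $Z_\U$ is such a subspace. Relatedly, your concern about pinning down the isomorphic type of $Y_\U$ is moot: nothing beyond its universal separable injectivity is ever used.
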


\begin{proof}
The first part obviously follows from Proposition~\ref{cvsg}.
As for the second part, we may clearly assume that $Z$ is complemented in $C(\Delta)$. If $\mathscr U$ is a free ultrafilter on $\mathbb N$, then $Z_\mathscr U$ is complemented in $C(\Delta)_\mathscr U$ and, under {\sf CH}, the later space is isometric $C(\N^*)$ which is isometric to $\ell_\infty/c_0$ --in {\sf ZFC}. On the other hand $Y_\mathscr U$ is universally separably injective, by \cite[Theorem 4.10]{accgm1} and so, every sequence $0\to Y_\mathscr U\to E\to \ell_\infty/c_0 \to 0$ splits. Therefore $0\to Y_\U\to
X_\U\to Z_\U\to 0$ splits.
\end{proof}

An interesting case occurs when one puts $\mathcal G$ as the
quotient space. Recall that Johnson and Zippin proved in
\cite{johnzippre} that every separable Lindenstrauss space is a
quotient of $C(\Delta)$; therefore, there exists an exact sequence
\begin{equation}\label{CG}\begin{CD}
0 @>>> \ker q @>>> C(\Delta) @>\pi >> \mathcal G @>>> 0\end{CD}
\end{equation}
which does not ultra-split, by the preceding Proposition. Pe\l czy\'nski posed on
the blackboard to us the question of whether it is possible to
identify the kernel(s) of the preceding sequence(s) and in particular if some kernel can be a $C$-space.
Observe that the structure of $\ker\pi$ effectively depends on the quotient map $\pi$.
It is not hard to check that $\ker\pi$ is
an $\mathscr L_\infty$-space when $\pi$ is an ``isometric" quotient -- this means that $\pi$ maps the open unit ball of $C(\Delta)$ onto that of $\mathcal G$.
On the other hand, Bourgain has shown that $\ell_1$ does contain an uncomplemented subspace isomorphic to itself, from where it follows that there is an exact sequence $0\to E\to F\to G\to 0$ in which both $F$ and $G$ are isomorphic to $c_0$ but $E$ is not an $\mathscr L_\infty$ --this can be seen in \cite[Appendix 1]{BourgainLNM}. Since both $C(\Delta)$ and $\mathcal G$ have (complemented) subspaces isomorphic to $c_0$ we see that there are quotient mappings $\pi:C(\Delta)\to \mathcal G$ whose kernels are not $\mathscr L_\infty$-spaces.

\subsection{Lindenstrauss spaces with isometric ultrapowers}
As we already mentioned, Heinrich undertook in \cite{heinrichL1} the classification of Lindenstrauss spaces up to ultra-isometry.
Amongst the many interesting results he proved one finds that the class of $C$-spaces is closed under ``isometric ultra-roots'': this just means that if a Banach space $X$ has an ultrapower isometric to a $C$-space then $X$ is itself isometric to a $C$-space. A similar result holds for $G$-spaces; see \cite[Theorems 2.7 and 2.10]{heinrichL1}. The result by Henson and Moore \cite[Theorem 6.5]{hensonmoore} that a Banach space is of almost universal
disposition if and only if some (or every) ultrapower is of almost universal disposition shows that the class of Lindenstrauss spaces of almost-universal disposition is also closed under ``isometric ultra-roots''. One can deduce from here that a Banach space $E$ has some ultrapower isometric to an ultrapower of the Gurari\u \i\ space if and only if every separable subspace is contained in a Gurari\u \i\ space contained in $E$.

At the end of \cite{heinrichL1} Heinrich asks whether the classes of $C_0$-spaces and $M$-spaces enjoy the same property.
In a subsequent paper \cite[Section~4]{hhm} (and also in \cite{hensonmoore83}, around Problem 4) it is claimed that there is a Banach space $X$ which fails to be isometric to a Banach lattice and such that $X\stackrel{u}\approx c_0$. Since $c_0$ is both a $C_0$-space and an $M$-space this would imply a negative solution for both questions. Unfortunately, a close inspection to the example reveals that it is indeed a $C_0$-space since it is a subalgebra of $\ell_\infty$. Indeed, if $\mathscr F$ is any almost disjoint family of subsets of $\N$, then the closed linear span of the characteristic functions of the sets of $\mathscr F$ and $c_0$ is always a subalgebra of $\ell_\infty$.  Thus, the following should be considered as an open problem.

\begin{problem}%[Heinrich]
Are the classes of $C_0$-spaces and $M$-spaces closed under ``isometric ultra-roots''?
\end{problem}

The following problem appears both in \cite{HH} (see Problem 2 on p. 316) and  \cite{hensonmoore83} (see Problems 5 and 7 on pp. 103 and 104).

\begin{problem}[Heinrich, Henson, Moore]
Does Gurari\u\i\ space have an ultrapower isometric (or isomorphic) to an ultraproduct of
finite dimensional spaces?
\end{problem}

Of course the hypothesized  finite dimensional spaces could not be at uniform distance from the corresponding $\ell_\infty^n$ spaces.

\subsection*{Acknowledgements}
AA was supported by MEC and FEDER (Project
MTM-2008-05396), Fundaci\'{o}n S\'{e}neca (Project 08848/PI/08) and
Ram\'on y Cajal contract (RYC-2008-02051). The research of the
other four authors has been supported in part by project
MTM2010-20190. The researh of FCS, JMFC, and YM is included in the
program Junta de Extremadura GR10113 IV Plan Regional I+D+i,
Ayudas a Grupos de Investigaci\'on.

\subsection*{Acknowledgement} The authors want to thank the anonymous referee for his outstanding job.


\begin{thebibliography}{99}

\bibitem{a-k}
F.~Albiac, N.J.~Kalton, \emph{Topics in Banach Space Theory}. Graduate Texts in Math., vol. 233. Springer,
New York (2006).



\bibitem{nastiest} S. A. Argyros, R. Haydon, \emph{ A hereditarily indecomposable $\mathcal L_\infty$-space that solves the scalar-plus-compact
problem.} Acta Math. 206 (2011) 1--54.





\bibitem{accgm1}
A.~Avil\'es, F.~Cabello S\'anchez, J.M.F.~Castillo, M.~Gonz\'alez,
Y.~Moreno, {\it On separably injective Banach spaces.} Adv. in Math. 234 (2013) 192--216.


\bibitem{accgm2}
A.~Avil\'es, F.~Cabello S\'anchez, J.M.F.~Castillo, M.~Gonz\'alez,
Y.~Moreno, {\it Banach spaces of universal disposition.}
J. Funct. Anal. 261 (2011) 2347--2361


\bibitem{bankston} P. Bankston.
\emph{Reduced coproducts of compact Hausdorff spaces.} J. Symbolic
Logic 52 (1987) 404--424.

\bibitem{bankston-survey} P. Bankston.
\emph{A survey of ultraproduct constructions in general topology.}
Top. Atlas 8 (1993) 1--32.


\bibitem{beny} Y. Benyamini.
\emph{Separable $G$ spaces are isomorphic to $C(K)$ spaces.}
Israel J. Math. 14 (1973) 287--293.

\bibitem{b} Y. Benyamini.
\emph{An $M$-space which is not isomorphic to a $C(K)$-space.}
Israel J. Math. 28 (1977) 98--104.



\bibitem{bl} Y. Benyamini, J. Lindenstrauss.
\emph{A predual of $\ell\sb{1}$ which is not isomorphic to a $C(K)$
space.}
%Proceedings of the International Symposium on Partial Differential
%Equations and the Geometry of Normed Linear Spaces (Jerusalem, 1972).
Israel J. Math. 13 (1972), 246--254 (1973)

\bibitem{besapelc} C. Bessaga, A. Pe\l czy\'nski, \emph{Spaces
of continuous functions IV.} Studia Math. 19 (1960) 53--62.


\bibitem{BourgainLNM} J. Bourgain.
\emph{New classes of $\mathcal{L}^p$-spaces.} Lecture Notes in
Math. 889. Springer-Verlag, 1981.

\bibitem{bourpisi} J. Bourgain, G. Pisier, \emph{A construction of $\mathcal{L}_\infty$-spaces and related Banach spaces}.
Bol. Soc. Bras. Mat. 14 (1983) 109--123.

\bibitem{c:M} F. Cabello S\'anchez.
\emph{Transitivity of $M$-spaces and Wood's conjecture.} Math.
Proc. Cambridge Philos. Soc.  124  (1998) 513--520.


\bibitem{ccky}  F. Cabello S\'{a}nchez, J.M.F. Castillo,
N.J. Kalton, D.T. Yost. \emph{Twisted sums with $C(K)$-spaces.}
Trans. Amer. Math. Soc. 355 (2003) 4523--4541.

\bibitem{castgonz}  J.M.F. Castillo, M. Gonz\'alez.
\emph{Three-space problems in Banach space theory.} Lecture Notes
in Math. 1667. Springer-Verlag, 1997.

%\bibitem{castmoresob} J.M.F. Castillo and Y. Moreno.
%\emph{Sobczyk's theorem and the Bounded Approximation Property.}
%Studia Math.  201 (2010), 1-19.



%\bibitem{castmorestud} J.M.F. Castillo, Y. Moreno and J. Su\'arez.
%\emph{On Lindenstrauss-Pelczy\'nski spaces.} Studia Math. 174
%(2006), 213--231.

%\bibitem{castplic} J.M.F. Castillo and A. Plichko,
%\emph{Banach spaces in various positions.} J. Funct. Anal. 259
%(2010) 2098-2138.

\bibitem{c-k}
C.C. Chang, H.J. Keisler. \emph{Model theory (Third edition).}
Studies in Logic and the Foundations of Math., 73.
North-Holland, 1990.

\bibitem{gasparis}
I. Gasparis, \emph{A new isomorphic $\ell_1$-predual not isomorphic to a complemented subspace of a $C(K)$-space},
Bull London Math. Soc., to appear (2013).



\bibitem{Gurariiold}
V.I. Gurari\u\i. \emph{Spaces of universal placement, isotropic
spaces and a problem of Mazur on rotations of Banach spaces.
(Russian).} Sibirsk. Mat. \v Z.  7  (1966) 1002--1013.


\bibitem{heinrich}
S. Heinrich. \emph{Ultraproducts in Banach space theory.} J. Reine
Angew. Math.  313  (1980) 72--104.

\bibitem{heinrichL1}
S. Heinrich. \emph{Ultraproducts of $L\sb{1}$-predual spaces.}
Fund. Math. 113 (1981) 221--234.


\bibitem{HH}
S. Heinrich, C.W. Henson, \emph{Banach space model theory. II.
Isomorphic equivalence}. Math. Nachr. 125 (1986) 301--317.

\bibitem{hhm}
S. Heinrich, C.W. Henson, L.C. Moore, \emph{Elementary equivalence of $L_1$-preduals,
Banach space theory and its applications} (Proceedings, Bucharest, 1981). Lecture Notes in Mathematics, vol. 991,
Springer-Verlag, Berlin, 1983, pp. 79--90.


\bibitem{henson-76}
C.W. Henson, \emph{Nonstandard hulls of Banach spaces}, Israel J. Math. 25 (1976) 108--144.


\bibitem{hensoniovino}
C.W. Henson and J. Iovino, \emph{Ultraproducts in analysis}. LMS
Lecture Notes Series 262. Cambridge 2002.

\bibitem{hensonmoore} C.W. Henson, L.C. Moore.
\emph{Nonstandard hulls of the classical Banach spaces.} Duke
Math. J.  41 (1974) 277--284.

\bibitem{hensonmoore83} C.W. Henson, L.C. Moore.
\emph{Nonstandard analysis and the theory Banach spaces.} In
``Nonstandard analysis--recent developments''. Lecture Notes in
Math. 983; pp. 27--112. Springer-Verlag, 1983.


\bibitem{johnzippre} W.B. Johnson, M. Zippin, {\it Separable $L_1$ preduals are quotients of $C(\Delta)$.} Israel
J. Math. 16 (1973) 198--202.



\bibitem{kubis-solecki}
W. Kubi\'s, S. Solecki,
\emph{A proof of uniqueness of the Gurari\u\i\ space}, Israel
J. Math., to appear.

 %P. Koszmider.
%\emph{On decomposition of Banach spaces of continuous functions on
%Mr\'owka's spaces}, Proc. Amer. Math. Soc. 133 (2005), 2137-2146.


%\bibitem{kuramos}  K. Kuratowski and A. Mostowski.
%\emph{Set Theory.} P.W.N.; Warszawa, 1968.

%\bibitem{lazarlind} A.J. Lazar and J Lindenstrauss. \emph{Banach spaces whose duals
%are $L_1$-spaces and their representing matrices.} Acta Math. 126
%(1971), 165--193


\bibitem{leura} D. Leung, F. R\"abiger, \emph{Complemented copies
of $c_0$ in $l^\infty$-sums of Banach spaces}. Illinois J. Math.
34 (1990) 52-58.

%\bibitem{lp}
%J. Lindenstrauss, A. Pe\l czy\'nski, Contributions to the theory of the classical
%Banach spaces, J. Funct. Anal. 8 (1971) 225--249.



%\bibitem{lindtzaf}  J. Lindenstrauss and L. Tzafriri,
%\emph{Classical Banach spaces I.} Springer-Verlag, 1977.

\bibitem{luskygura} W. Lusky.
\emph{The Gurari\u \i\ spaces are unique.} Arch. Math. 27 (1976)
627-635.

\bibitem{lusk} W. Lusky.
\emph{On separable Lindenstrauss spaces.} J. Funct. Anal. 26 (1977) 103--120.


\bibitem{pelcde}  A. Pe\l czy\'nski, \emph{Projections in certain Banach spaces.} Studia Math. 19 (1960) 209-228.


\bibitem{pelc}  A. Pe\l czy\'nski, \emph{On $C(S)$-subspaces of separable Banach spaces.} Studia Math. 31 (1968) 513-522.



\bibitem{p-w}
A.~Pe{\l}czy\'nski, P. Wojtaszczyk, \emph{Banach spaces with
finite dimensional expansions of identity and universal bases of
finite dimensional spaces}.  Studia Math. 40 (1971),
91--108.

\bibitem{rose} H.P. Rosenthal.
\emph{On relatively disjoint families of measures, with some
applications to Banach space theory.} Studia Math. 37 (1970),
13--36.


\bibitem{semadeni}
Z. Semadeni, \emph{Banach spaces non-isomorphic to their Cartesian squares} II.
Bull. Acad. Polon. Sci. Sér. Sci. Math. Astr. Phys. 8 (1960) 81--84

\bibitem{sims} B. Sims.
\emph{``Ultra''-techniques in Banach space theory.} Queen's Papers
in Pure and Applied Mathematics, 60. Kingston, ON, 1982.


\bibitem{stern} J. Stern.
\emph{Ultrapowers and local properties of Banach spaces.} Trans.
Amer. Math. Soc. 240 (1978) 231--252.

\bibitem{tarb} M. Tarbard, \emph{Hereditarily indecomposable separable $\mathcal L_\infty$ with $\ell_1$ dual
having few operators, but not very few operators.}  J. London Math. Soc. 85 (2012) 737--764.


\bibitem{wojt} P. Wojtaszczyk, \emph{Some remarks on the Gurarij space}.  Studia Math., 41 (1972) 207--210.

\bibitem{zippin} M. Zippin,
\emph{Extension of bounded linear operators.} Handbook of the
Geometry of Banach Spaces, Vol. 2. W.B. Johnson and J.
Lindenstrauss, eds. Elsevier, Amsterdam 2003.




\end{thebibliography}
\end{document}